\documentclass[12pt]{article}

\usepackage{amsmath,amsfonts,amssymb,amsthm,graphicx,float,color,verbatim}
\usepackage{caption}
\usepackage{subfig}
\newcommand*\midpoint[1]{\overline{#1}}

\usepackage{geometry}
 \geometry{
 a4paper,
 total={210mm,297mm},
 left=20mm,
 right=20mm,
 top=20mm,
 bottom=20mm,
 }

\theoremstyle{plain}
\newtheorem{thm}{Theorem}[section]
\newtheorem{lem}[thm]{Lemma}

\newtheorem{Theorem 1}{Theorem}

\theoremstyle{definition}
\newtheorem{defn}[thm]{Definition}

\theoremstyle{remark}
\newtheorem{rem}[thm]{Remark}

\title{No surface-knot of genus one has triple point number two}
\author{A. Al Kharusi\footnote{Department of Mathematics and Statistics, College of Science, Sultan Qaboos University, Email address: p53723@squ.edu.om}   \quad and \quad  T. Yashiro\footnote{Department of Mathematics and Statistics, College of Science, Sultan Qaboos University, Email address: yashiro@squ.edu.om}}
\date{}
\begin{document}
\maketitle
\begin{abstract}
It is known that there is no 2-knot with triple point number two. The present paper shows  that there is no surface-knot of genus one with triple point number two. In order to prove the result, we use Roseman moves and the algebraic intersection number of simple closed curves in the double decker set. 
\end{abstract}
\section{Introduction}
A \textit{surface-knot} $F$ is a (might be disconnected or non-orientable) closed surface smoothly embedded in the Euclidean $4$-space ${\mathbb R}^4$. It is called a \textit{2-knot} if it is homeomorphic to a $2$-sphere.  
The \textit{triple point number} of $F$ is analogous to the crossing number of a classical knot. Specifically, it is defined by the minimal number of triple points over all projections in ${\mathbb R}^3$ representing it, and it is denoted by $t(F)$.  Surface-knot tabulations based on the triple point numbers are considered in \cite{Hatakenaka,Oshiro,paper_2Sat1,paper_2Sat3,paper_2SatShim,paper_2SatShim2}. Up to now, we have very few examples of surface-knots whose triple point numbers are determined \cite{paper_2SatShim,paper_2SatShim2}.
A non-trivial surface-knot $F$ with $t(F)=0$ is called a \textit{pseudo-ribbon} \cite{paper_2Kaw} (if $F$ is a 2-knot, then it is called a \textit{ribbon 2-knot}). It is proved in \cite{paper_2Sat} that any surface-knot $F$ satisfies $t(F) \neq 1$. There are two known examples of a disconnected surface-knot $F$ consisting of two components with $t(F)=2$ \cite{Oshiro,paper_2Sat1}, none of these examples is orientable. S. Satoh showed in  \cite{paper_2Sat3} that no 2-knot has triple point number two or three. It has been proved in \cite{paper_2Yash} that if a connected orientable surface-knot has at most two triple points and the lower decker set is connected, then the fundamental group of the surface-knot is isomorphic to the infinite cyclic group. Till now, we have no examples of surface-knots with odd triple point number, even if the surface-knot is non-orientable, or disconnected.  The 2-twist-spun trefoil is known to have the triple point number four \cite{paper_2SatShim}. In particular, it is counted as one of the simplest non-ribbon 2-knots according to the triple point number.  This implies that if there exists an orientable surface-knot with triple point number two, then it must be with non-zero genus.  We show in this paper that it must be with genus at least two indeed. In particular, we show the following theorem.
\begin{thm}\label{xy}
Let $F$ be an orientable surface-knot of genus one. If the singularity set of the projection into 3-space $\mathbb{R}^3$ contains two triple points, then $F$ satisfies $t(F)=0$.
\end{thm}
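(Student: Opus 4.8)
The plan is to convert a generic projection of $F$ carrying exactly two triple points into one with no triple points by a finite sequence of Roseman moves; since Roseman moves do not change the surface-knot $F$, the existence of such a diagram gives $t(F)=0$. (If $F$ is trivial there is nothing to prove, so assume $F$ is non-trivial; and if the reduction only reached a diagram with a single triple point, the cited fact $t(F)\neq 1$ would still give $t(F)=0$.)

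First I would fix the combinatorics of the double decker set. Let $p\colon F\to\mathbb R^3$ be the projection, $\Gamma\subset\mathbb R^3$ its double point set, and $D=\overline{p^{-1}(\Gamma)}\subset F$ the double decker set, decomposing as $D=D_u\cup D_l$ into the upper and lower decker curves according to the height function in the projection direction. After an initial normalization — in particular we may assume the diagram has no branch points, using that the normal Euler number of an orientable surface-knot in $\mathbb R^4$ vanishes — the set $\Gamma$ is a $6$-valent graph whose only vertices are the two triple points $\tau_1,\tau_2$, and each $\tau_i$ has three preimages $\tau_i^{\,T}\in D_u$, $\tau_i^{\,M}\in D_u\cap D_l$, $\tau_i^{\,B}\in D_l$: the point $\tau_i^{\,T}$ is a transverse self-crossing of $D_u$, the point $\tau_i^{\,B}$ a transverse self-crossing of $D_l$, and $\tau_i^{\,M}$ a transverse intersection point of $D_u$ with $D_l$. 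Hence $D_u$ and $D_l$ are each unions of immersed circles carrying exactly two self-crossings in total, and $D_u\cap D_l=\{\tau_1^{\,M},\tau_2^{\,M}\}$.

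The heart of the argument is homological. Smoothing the self-crossings of $D_u$ and of $D_l$ produces finite systems of pairwise disjoint simple closed curves in $F$, and since $F$ has genus one, $H_1(F;\mathbb Z)\cong\mathbb Z^2$ carries the skew-symmetric intersection form of rank two. I would then compute the algebraic intersection numbers among these curves — notably $D_u\cdot D_l$, evaluated as the signed count over $\tau_1^{\,M}$ and $\tau_2^{\,M}$ — in terms of the local sign data of $\tau_1$ and $\tau_2$. The key point is that the pattern in which the double point arcs of $\Gamma$ run between $\tau_1$ and $\tau_2$ forces a rigid relation between the two triple point signs, so that these intersection numbers are pinned to a single value; combined with the rank-two constraint on $H_1(F)$, this should force the homology classes of the components of $D$ into one of only finitely many configurations — morally, those in which $\tau_1$ and $\tau_2$ are mutually inverse. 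This is exactly where genus one differs from Satoh's sphere case: with $H_1$ no longer trivial, it is the intersection-number identity, rather than a bare dimension count, that does the work.

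Finally, for each configuration surviving the previous step I would exhibit an explicit sequence of Roseman moves that cancels the two triple points: slide one triple point along a double curve until it meets the other and remove the pair by the appropriate Roseman moves, the remaining double point curves being irrelevant to $t(F)$. I expect the main obstacle to be the two middle steps in combination — establishing the precise identity relating $D_u\cdot D_l$ to the triple point signs, and then verifying that the rigidity it supplies on a genus-one surface really does exclude every ``essential'' two-triple-point configuration, so that in all surviving cases the triple points are genuinely cancellable. Once the short list of configurations is in hand, the concluding Roseman-move bookkeeping is routine.
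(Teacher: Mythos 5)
There is a genuine gap at two places, and both are where the real content of the theorem lies. First, your initial normalization ``we may assume the diagram has no branch points'' is not available. The vanishing of the normal Euler number (equivalently, the Carter--Saito cancellation of branch points) lets one remove branch points from \emph{some} projection, but the elimination process can create new triple points, so you cannot assume a diagram that simultaneously has exactly two triple points and no branch points. In a $t$-minimal diagram the only control one has is weaker: if a double edge joining a triple point to a branch point is a $b/m$- or $m/t$-branch, the triple point could be removed by an $R$-$6^-$ move, so such edges must end at triple points; but $b/t$-branches may perfectly well end at branch points, and the paper's case analysis must (and does) treat configurations containing double point intervals with branch point endpoints. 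Your claim that $D_u\cap D_l$ consists of exactly the two points $\tau_1^M,\tau_2^M$ and that $D_u$, $D_l$ are unions of circles also silently uses this unavailable normalization. Second, the rigidity you want (same type, same Alexander numbering, opposite signs for the two triple points) does not come from an intersection-number computation of $D_u\cdot D_l$; it comes from Satoh's counting identity $t_0(\lambda)+2t_2(\lambda)+t_5(\lambda)+2t_{25}(\lambda)=t_0(\lambda+1)+t_2(\lambda+1)+2t_5(\lambda+1)+2t_{25}(\lambda+1)$ derived from Alexander numberings, and you have not supplied a substitute derivation.

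The concluding step is also not ``routine Roseman-move bookkeeping.'' To kill the two triple points one must either realize them as a $2$-cancelling pair for $R$-$2^-$ or produce a pinch disk for $R$-$5^+$ followed by sliding a branch point along an $m/t$-branch, and both require exhibiting disks in $F$ (or in $\mathbb R^3$) bounded by specific decker arcs whose interiors are disjoint from the double decker set. Establishing those disks is precisely what the genus-one hypothesis is used for: e.g.\ among the four loops formed by pairing the lower decker arcs of the $b/m$- and $b/t$-branches, four pairwise disjoint essential curves on a torus would all be homologous, contradicting a relation of the form $[b]=[a]+[c]+[d]$ in $\mathbb Z\oplus\mathbb Z$, so one of them bounds a disk; and even then the interior of that disk may meet closed double curves, which must first be pushed off by an auxiliary argument (combinations of $R$-$1^+$ and $R$-$7$ producing descendent or pinch disks). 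Without these ingredients, ``slide one triple point into the other and cancel'' is exactly the step that can fail, so the proposal as written does not yet constitute a proof, although its overall strategy (Roseman moves plus homology and intersection numbers on the genus-one surface) is the same in spirit as the paper's.
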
  
From Theorem \ref{xy}, we see that if $F$ is non-trivial, then there exists a projection of $F$ with singularity set consists of only simple closed double curves.\\
In this paper, a surface-knot is always assumed to be oriented. The paper is organized as follows. In section 2, we review some basics about the surface-knot diagrams. Roseman moves are recalled in section 3, in which we give descriptions of the moves $R$-$2$, $R$-$5$ and $R$-$7$. In section 4, we refer to the obstruction on the projection of a surface-knot found by S. Satoh \cite{paper_2Sat2}.  Section 5 reviews the algebraic intersection number of two loops in the torus. Section 6 provides some lemmas that are needed for the discussion in section 7, where the proof of the main result (Theorem \ref{xy}) is given.

\section{Preliminaries}
\subsection{Surface-knot diagrams}
In order to describe a surface-knot $F$, we consider the projection of the surface-knot into $\mathbb{R}^3$ with some extra information. This is a generalization of the notion of knot diagrams in classical knot theory.\\
Let $p:\mathbb{R}^{4} \rightarrow \mathbb{R}^{3}$ be the orthogonal projection map defined by $p(x_1,x_2,x_3,x_4)=(x_1,x_2,x_3)$. The image of a surface-knot $F$ under the projection, $p(F)$ in $\mathbb{R}^{3}$, is denoted by $\vert \Delta \vert$.  We may move $F$ in $\mathbb{R}^4$ slightly so that $\vert \Delta \vert$ becomes a generic surface. The closure of the multiple point set 
\[
\{x \in p(F) \mid p(x_1)=p(x_2)=x \quad \text{for some} \quad x_1 \neq x_2  \quad \text{where}  \quad x_1,x_2 \in F \}
\]
is called the \textit{singularity set} of the projected image and it consists of double points, isolated triple points and isolated branch points. Double points form a disjoint union of open arcs and simple closed curves. We say that such an open arc is called a \textit{double edge}. Both triple points and branch points are in the boundary of double edges. We will use the notations $\mathcal{D}$, $\mathcal{T}$, $\mathcal{B}$,  $\mathcal{E}$ to denote a double point, a triple point, a branch point and a double edge, respectively. We also denote by $\mathcal{M}_2$, $\mathcal{M}_3$ and $\mathcal{S} \subset p(F)$ the sets of all double points,  all triple points and all branch points, respectively. Let $h:\mathbb{R}^4 \rightarrow \mathbb{R}$ be the height function defined by $h(x_1,x_2,x_3,x_4)=x_4$. For a double point $\mathcal{D}$ in $\vert \Delta \vert$, there is a 3-ball neighbourhood $B^3(\mathcal{D})$ containing $\mathcal{D}$ such that $(p|_{F})^{-1}\big(B(\mathcal{D}) \cap \vert \Delta \vert \big)$ is a disjoint union of disks $D_U$ and $D_L$ in $F$ with $h(x)>h(x')$ holds for any $x \in D_U$ and $x' \in D_L$. We say that $p(D_U)$ and $p(D_L)$ are \textit{upper} and \textit{lower} sheets at $\mathcal{D}$, respectively, and denoted by $U$ and $L$, respectively. Similarly, for a triple point $\mathcal{T}$ in $\vert \Delta \vert$, there exists a 3-ball neighbourhood $B^3(\mathcal{T})$ of $\mathcal{T}$ in $\mathbb{R}^3$ such that $(p|_{F})^{-1}\big(B(\mathcal{T}) \cap \vert \Delta \vert\big)$ consists of three disjoint disks $D_T$, $D_M$ and $D_B$ in $F$ with $h(x)>h(x')>h(x'')$ holds
for any $x \in D_T$, $x' \in D_M$, and $x'' \in D_B$.  $p(D_T)$, $p(D_M)$, and $p(D_B)$ are labelled $T,M$ and $B$ and called the \textit{top} sheet, the \textit{middle} sheet and the \textit{bottom} sheet, respectively. 
A surface-knot diagrams is a generalization of the classical knot diagrams. That is, a surface-knot diagram of $F$, denoted by $\Delta$, is obtained from $\vert \Delta \vert$ in $\mathbb{R}^{3}$ by removing a small neighbourhood of the singularity set in lower sheets. In particular, in a diagram,  locally the lower sheet is divided into two regions and the middle and bottom sheets are broken into two and four regions, respectively. Thus a surface-knot diagram is represented by a disjoint union of compact surfaces which are called \textit{broken sheets} (cf.\cite{3}).  The three pictures in Figure \ref{singu} show broken sheets around a double point, a triple and a branch point from left to right, respectively.
\begin{figure}[H]
\centering
\captionsetup{font=scriptsize}      
\mbox{\includegraphics[scale=0.5]{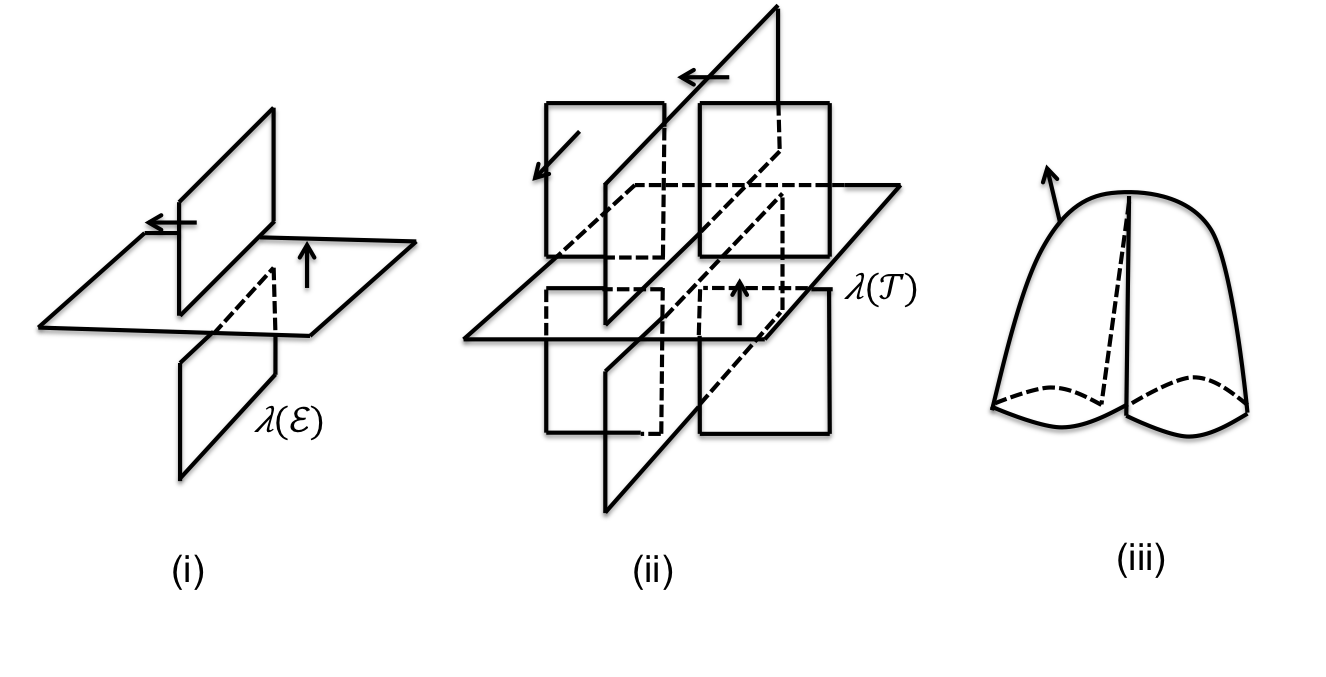}}
 \caption{}
\label{singu}
\end{figure}
\subsection{$t$-minimal diagrams}
Let $\Delta$ be a surface-knot diagram of a surface-knot $F$. Let $t(\Delta)$ denote the number of triple points of $\Delta$. We say that a surface-knot diagram $\Delta$ is \textit{t-minimal} if it is a surface-knot diagram with minimal number of triple points for all possible diagrams of $F$, that is $t(\Delta)=t(F)$.
\subsection{Alexander numbering} 
An \textit{Alexander numbering} for a surface-knot is a function that assigns an integer to each 3-dimensional complemantary region of the diagram as follows.  Two regions that are separated by a sheet are numbered consecutively;  the region into which a normal vector to the sheet points has the larger number  (for example, see    \cite{paper_2Kam2}). Such a number is  called the \textit{index} of the region. For each point $x \in \mathcal{M}_2, \mathcal{M}_3$, or $\mathcal{S}$, the integer $\lambda(x)$ is called the \textit{Alexander numbering} of $x$ and defined as the minimal Alexander index among the four, eight, or three regions surrounding $x$, respectively.  Equivalently, $\lambda(x)$ is the Alexander index of a specific region $R$, called a \textit{source region},  where all orientation normals to the bounded sheets point away from $R$ (see Figure \ref{Alex}). For a double edge $\mathcal{E}$, we use the notation $\lambda(\mathcal{E})=\lambda(\mathcal{D})$, $\mathcal{D} \in \mathcal{E}$ as the Alexander numbering $\lambda(\mathcal{D})$ is independent of the choice of the double point $\mathcal{D}$.   

\begin{figure}[H]
\centering
\captionsetup{font=scriptsize}      
\mbox{\includegraphics[scale=0.5]{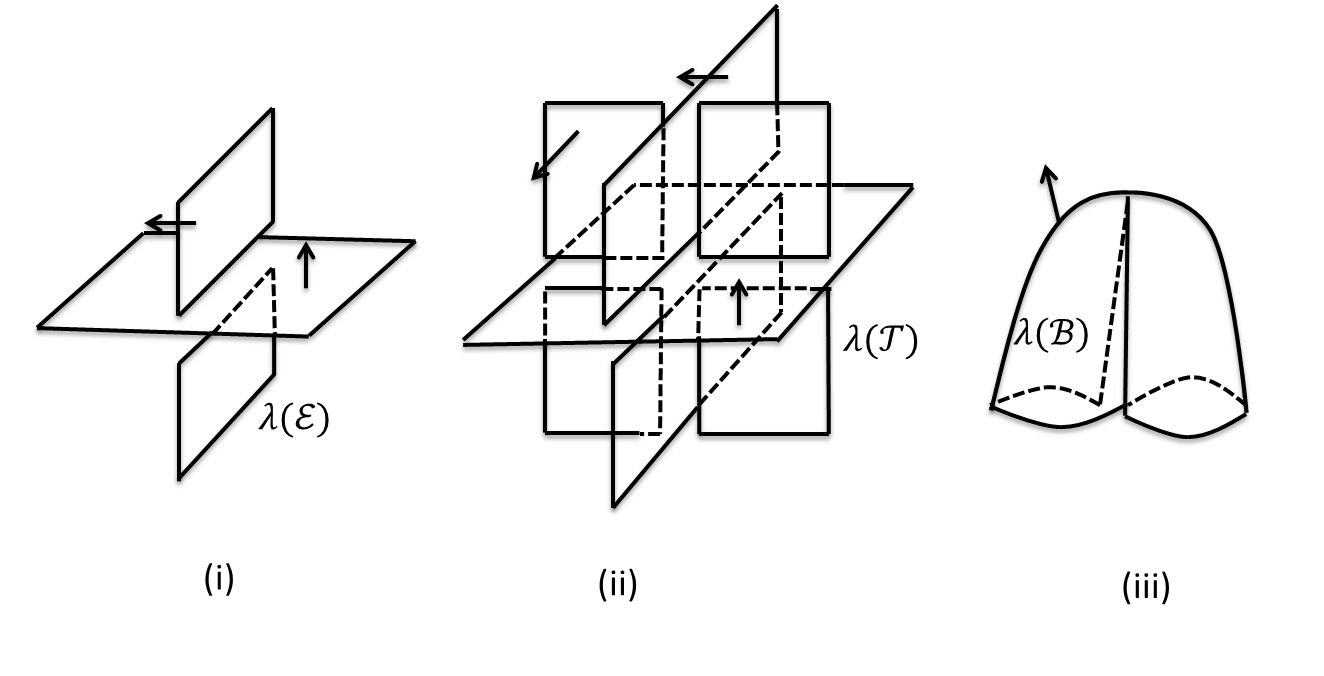}}
 \caption{}
\label{Alex}
\end{figure}
\subsection{Signs, orientations and type of branches at triple points}
We give sign to the triple point  $\mathcal{T}$ of a surface-knot diagram as follows. Let $n_T$ ,$n_M$ and $n_B$ denote the normal vectors to the top, the middle and the bottom sheets presenting their orientations, respectively. The \textit{sign} of  $\mathcal{T}$, denoted by $\epsilon({\mathcal{T}})$, is $+1$ if the triplet $(n_T,n_M,n_B)$ matches the orientation of $\mathbb{R}^3$ and otherwise $-1$. See Figure \ref{Alex} (ii), where the case of a positive triple point is depicted.\\
There are six double edges incident to $\mathcal{T}$ called the \textit{branches} of double edges at $\mathcal{T}$.  Such a branch is called a $b/m$-, $b/t$- or $m/t$-branch if it is the intersection of bottom
and middle, bottom and top, or middle and top sheets at $\mathcal{T}$, respectively. \\
We assign an orientation to a double edge in a surface-knot diagram so that for a tangent vector $v$ to the double edge at a double point $\mathcal{D}$, the ordered triple $(n_U,n_L,v)$ matches the orientation of $\mathbb{R}^3$, where $n_U$ and $n_L$ are normal vectors to the upper sheet $U$ and the lower sheet $L$ at $\mathcal{D}$ presenting their orientations, respectively.\\
Let $\mathcal{E} \subset \mathcal{M}_2$ be a double edge. Suppose one of the boundary points of $\mathcal{E}$ is a branch point $\mathcal{B}$. Because $\mathcal{E}$ connects to a branch point, it follows that $\lambda(\mathcal{E})=\lambda(\mathcal{B})$. The sign of the branch point $\mathcal{B}$, denoted by $\epsilon(\mathcal{B})\in \{+1,-1\}$, is defined according to the orientation of $\mathcal{E}$. In fact,  $\epsilon(\mathcal{B})=+1$ if the orientation of $\mathcal{E}$ points towards $\mathcal{B}$ and otherwise $-1$ (cf. \cite{sign}). Figure \ref{Alex} (iii) depicts a positive branch point.
\subsection{Double point curves of surface-knot diagrams}
The singularity set of a projection is regarded as a union of oriented curves immersed in $\mathbb{R}^3$. We call such an oriented curve a \textit{double point curve}. In the following we define the two kinds of double point curves in a diagram. 
Let $n_1<\dotso < n_k$ be an ordered sequence. Let $\mathcal{E}_{n_1},\dotso,\mathcal{E}_{n_k},\mathcal{E}_{n_{k+1}}=\mathcal{E}_{n_1}$ be double edges and let $\mathcal{T}_{n_1},\dotso,\mathcal{T}_{n_k},\mathcal{T}_{n_{k+1}}=\mathcal{T}_{n_1}$ be triple points of the surface-knot diagram $ \Delta $ of $F$. For $i=1,\dotso , k$, assume that 
\begin{itemize}
\item[(i)] $\mathcal{E}_{n_i}$ and $\mathcal{E}_{n_{i+1}}$ are in opposition to each other at $\mathcal{T}_{n_{i+1}}$, and 
\item[(ii)] $\mathcal{T}_{n_i}$ and $\mathcal{T}_{n_{i+1}}$ bound $\mathcal{E}_{n_i}$. 
\end{itemize}
Then the closure of the union $\mathcal{E}_{n_1} \cup...\cup \mathcal{E}_{n_k}$ forms a circle component called a \textit{double point circle} of the diagram.  Note that we do not assume  $\mathcal{T}_{n_i} \neq \mathcal{T}_{n_j}$, for distinct $i,j \in \{1, \dotso , k\}$. By giving a BW orientation to the singularity set (for a BW orientation see \cite{paper_2BW}), it is easy to verify the following.
\begin{lem}[\cite{paper_2BW}]\label{even}
The number of triple points along each double point circle is even.
\end{lem}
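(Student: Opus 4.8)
The plan is to follow the Alexander numbering around a double point circle and observe that it must change sign an even number of times; this is, up to reduction modulo $2$, the BW-orientation argument of \cite{paper_2BW}.

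First I would record the local behaviour of the Alexander numbering at a triple point $\mathcal{T}$. Model a neighbourhood of $\mathcal{T}$ by the three coordinate planes of $\mathbb{R}^3$, with $T,M,B$ the top, middle and bottom sheets, and put $n=\lambda(\mathcal{T})$, so that the source region $R_{\emptyset}$ has Alexander index $n$. Index the eight complementary regions around $\mathcal{T}$ by subsets $S\subseteq\{T,M,B\}$, where $R_S$ is reached from $R_{\emptyset}$ by crossing exactly the sheets in $S$, each in the direction of its orientation normal; since the index increases by $1$ across a sheet in the direction of its normal, $R_S$ has index $n+|S|$. Now fix one of the three types of branch at $\mathcal{T}$, say the $b/m$-branches: these are the two rays of the double point line $B\cap M$, separated by the sheet $T$. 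The four regions surrounding a double point on the ray lying on the same side of $T$ as $R_{\emptyset}$ are $R_{\emptyset},R_{\{M\}},R_{\{B\}},R_{\{M,B\}}$, with minimum index $n$, while the four surrounding a double point on the opposite ray are $R_{\{T\}},R_{\{T,M\}},R_{\{T,B\}},R_{\{T,M,B\}}$, with minimum index $n+1$. As $\lambda$ on a double edge is the minimum of the indices of its surrounding regions, the two $b/m$-branches at $\mathcal{T}$ carry Alexander numberings $n$ and $n+1$; the computation is identical for the $b/t$- and $m/t$-branches. Thus \emph{two double edges in opposition at a triple point have Alexander numberings differing by exactly $1$} (here "in opposition" is read in the geometric sense that the two branches are the two rays at $\mathcal{T}$ of one of the double point lines $T\cap M$, $T\cap B$, $M\cap B$, which is the way a double point curve runs through a triple point).

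Given a double point circle with double edges $\mathcal{E}_{n_1},\dots,\mathcal{E}_{n_k}$ and triple points $\mathcal{T}_{n_1},\dots,\mathcal{T}_{n_k}$, $\mathcal{E}_{n_{k+1}}=\mathcal{E}_{n_1}$, as in the definition, the Alexander numbering $\lambda(\mathcal{E}_{n_i})$ is a well-defined constant along each $\mathcal{E}_{n_i}$. By condition (i) of the definition, $\mathcal{E}_{n_i}$ and $\mathcal{E}_{n_{i+1}}$ are in opposition at $\mathcal{T}_{n_{i+1}}$, so the previous paragraph gives $\lambda(\mathcal{E}_{n_{i+1}})-\lambda(\mathcal{E}_{n_i})=\varepsilon_i$ with $\varepsilon_i\in\{+1,-1\}$ for $i=1,\dots,k$. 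Summing over $i$, the left-hand side telescopes to $\lambda(\mathcal{E}_{n_1})-\lambda(\mathcal{E}_{n_1})=0$, hence $\varepsilon_1+\dots+\varepsilon_k=0$; a sum of $k$ terms each equal to $\pm1$ can vanish only when $k$ is even. Since $k$ is precisely the number of triple points traversed by the circle (counted with multiplicity, as the $\mathcal{T}_{n_i}$ need not be distinct), this is the assertion.

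The one step needing care is the local analysis at $\mathcal{T}$ in the second paragraph: identifying which of the eight regions abut each of the two opposite branches of a fixed type, and checking that the two minima come out as $n$ and $n+1$ rather than, say, both equal to $n$. This is, however, a direct inspection of the coordinate model of a triple point together with the sign rule for the Alexander numbering, so I do not expect a genuine obstacle here.
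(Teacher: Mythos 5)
Your proof is correct, but it takes a genuinely different route from the paper. The paper's proof endows the singularity set with a BW orientation (following \cite{paper_2BW}): at every triple point the BW orientation on the two branches of a double point line in opposition is reversed, so consecutive double edges along a double point circle carry opposite orientations, and closing up the circle forces an even number of reversals, i.e.\ an even number of triple point passages. You replace this orientation-reversal parity by an Alexander-numbering parity: the local computation in the coordinate model (the two branches of a fixed type at $\mathcal{T}$ are separated by the transverse sheet, so their surrounding regions have minimal indices $n$ and $n+1$) shows $\lambda$ jumps by $\pm 1$ at each passage through a triple point, and the telescoping sum around the circle vanishes, which again forces the number of passages to be even. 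Both are local mod-$2$ arguments hinging on a global structure that exists precisely because the generic surface is a projection of an embedded surface: the BW orientation in the paper's case, the Alexander numbering (Kamada--Carter--Saito, \cite{paper_2Kam2}) in yours --- and you should note explicitly that your argument uses this existence result, together with the orientation of $F$ assumed throughout the paper, whereas the BW-orientation proof applies verbatim to liftable non-orientable surfaces as well. What your approach buys is economy within this particular paper: the Alexander numbering and the fact that $\lambda(\mathcal{E})$ is constant on a double edge are already set up in Section~2 and used throughout (e.g.\ in Remark~\ref{conn} and in Satoh's identity~(\ref{x})), so no appeal to the auxiliary notion of BW orientation is needed; your counting with multiplicity of repeated triple points along the circle is also handled correctly, matching the paper's convention that the $\mathcal{T}_{n_i}$ need not be distinct.
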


\begin{proof}
Let $C=\midpoint{\mathcal{E}}_{n_{1}} \cup \dotso \cup \midpoint{\mathcal{E}}_{n_{k}}$ be a double point circle of a surface-knot diagram, where $\midpoint{\mathcal{E}}$ stands for the closure of $\mathcal{E}$. We give a BW orientation to the singularity set such that the orientation restricted to branches at every triple point is as depicted in Figure \ref{ori}. It follows that the double branches $\mathcal{E}_{n_i}$ and $\mathcal{E}_{n_{i+1}}$ admit opposite orientations on both sides of $\mathcal{T}_{n_{i+1}}$. Hence $n$ is even.
\end{proof} 

\begin{figure}[H]
\centering
\captionsetup{font=scriptsize}      
\mbox{\includegraphics[scale=0.5]{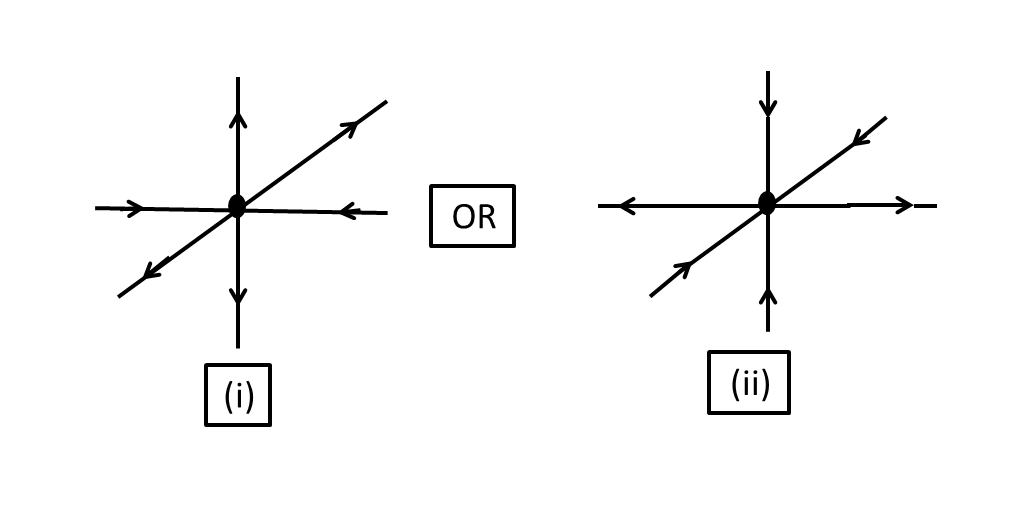}}
 \caption{BW orientation to the double branches at the triple point}
\label{ori}
\end{figure}

Similarly, we define a double point interval in a surface-knot diagram. Let $\mathcal{E}_{n_1},\dotso,\mathcal{E}_{n_k}$ be double edges, $\mathcal{T}_{n_1},\dotso,\mathcal{T}_{n_{k-1}}$ be triple points of $\Delta$ and suppose $\mathcal{B}_1$ and $\mathcal{B}_2$ are branch points of $\Delta$. Let the boundary points of $\mathcal{E}_{n_1}$ be the triple point $\mathcal{T}_{n_1}$ and the branch point $\mathcal{B}_1$. Let the double edge $\mathcal{E}_{n_k}$ be bounded by $\mathcal{T}_{n_{k-1}}$ and $\mathcal{B}_{2}$. Suppose that the following conditions hold:
\begin{itemize}
\item[(i)] $\mathcal{E}_{n_i}$ and $\mathcal{E}_{n_{i+1}}$ are in opposition to each other at $\mathcal{T}_{n_i}$ $(i=1,2,\dots,k-1)$, and
\item[(ii)] The double edge $\mathcal{E}_{n_i}$ is bounded by $\mathcal{T}_{n_{i-1}}$ and $\mathcal{T}_{n_i}$ $(i=2,\dots,k-1)$.
\end{itemize}
Then the closure of the union  $\mathcal{E}_{n_1} \cup \mathcal{E}_{n_2} \cup \dots \cup  \mathcal{E}_{n_k}$ forms an oriented interval component called a \textit{double point interval} of the diagram. Notice that the orientation of the double edges naturally leads to an orientation of a double point curve.
\begin{rem}\label{conn}
 Let $\Delta$ be a surface-knot diagram of a surface-knot $F$. Let $\mathcal{T}_{n_i}$, $\mathcal{T}_{n_{i+1}}$ and $\mathcal{T}_{n_{i+2}}$ be triple points giving order in a double point circle $C$ of $\Delta$. By giving an Alexander number to each of the eight regions surrounding $\mathcal{T}_{n_i}$, we see that it is impossible to have $\mathcal{T}_{n_{i}}=\mathcal{T}_{n_{i+1}}=\mathcal{T}_{n_{i+2}}$. 
\end{rem}

\subsection{Double decker sets of surface-knot diagrams}
The pre-image of the singularity set of a surface-knot diagram is called a \textit{double decker set} that is the union of \textit{upper} and \textit{lower decker sets} \cite{lift}. In particular, let $\Delta$ be a surface-knot diagram of a surface-knot $F$ and let $C$ be a double point curve of $\Delta$. For a double edge $\mathcal{E}$ contained in $C$. Let $(p \mid_ {F})^{-1}\big(\mathcal{E} \big)=\{\mathcal{E}^U,\mathcal{E}^L\}$ be a pair of open arcs such that $\mathcal{E}^U$ is in the upper disk $D_U$ while $\mathcal{E}^L$ is in the lower disk $D_L$ of $F$. Let $\bar{\mathcal{E}}$ stand for the closure of $\mathcal{E}$. Then the union $C^U=\bigcup_{\mathcal{E} \in C} \big(\bar{\mathcal{E}}^U
\big)$ is called the \textit{upper decker curve}. Similarly, the union $C^L=\bigcup_{\mathcal{E} \in C} \big(\bar{\mathcal{E}}^L
\big)$ is called the \textit{lower decker curve}. In fact, the upper or lower decker curve can be regarded as a circle or interval component immersed into $F$. The crossing point corresponds to a triple point in the projection. We use the notation $\mathcal{T}^W$ $W=\{T,M,B\}$ to indicate the pre-image of the triple point $\mathcal{T}$ in the $D_W$ disk. The union of upper decker curves forms the upper decker set. Similarly, the union of the lower decker curves gives the lower decker set.

\section{Roseman moves}
D. Roseman introduced seven types of local transformations and he proved the following lemma.
\begin{lem}[\cite{Roseman}] Two surface-knot diagrams are equivalent if and only if there exists a finite sequence of local moves to deform one diagram into the other.
\end{lem}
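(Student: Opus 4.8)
The plan is to adapt the proof of Reidemeister's theorem to one dimension higher, working at the level of generic projections of surfaces into $\mathbb{R}^3$ and using the theory of stable maps together with their generic one-parameter families. The easy direction is the ``if'' part: I would check, one move at a time, that each of the seven local transformations is induced by an ambient isotopy of $\mathbb{R}^4$ supported inside a small $4$-ball. For each move one writes down an explicit isotopy that slides, creates, or cancels the relevant sheets and re-layers them according to the height function $h$, so that the two diagrams appearing in the move are the before-and-after projections while the embedded surface in $\mathbb{R}^4$ is merely pushed around. Since a finite composition of ambient isotopies is again an ambient isotopy, any finite sequence of Roseman moves realises an equivalence of surface-knots.

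The substantial direction is the ``only if'' part. Suppose $F_0$ and $F_1$ are equivalent, so there is an ambient isotopy $\Phi\colon\mathbb{R}^4\times[0,1]\to\mathbb{R}^4$ with $\Phi_0=\mathrm{id}$ and $\Phi_1(F_0)=F_1$. Writing $g_t=\Phi_t|_{F_0}\colon F_0\to\mathbb{R}^4$ for the resulting family of embeddings and $\pi_t=p\circ g_t\colon F_0\to\mathbb{R}^3$ for their projections, the diagram at time $t$ is recorded by $\pi_t$ together with the relative values of $h$ along the double point set. The first step is a transversality argument: after an arbitrarily small perturbation of $\Phi$ fixing the two (already generic) endpoints, I may assume by the Thom--Mather theory that $\{\pi_t\}$ is a generic one-parameter family of maps of a surface into $\mathbb{R}^3$, so that $\pi_t$ is a stable map except at finitely many values $t_1<\dots<t_m$, where stability fails in exactly one codimension-one way. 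A key simplification is that $g_t$ is an embedding for every $t$, so the four-dimensional heights of two sheets crossing in the projection never coincide; hence the over/under data varies continuously and can change only at these same exceptional times. Consequently, on each interval $(t_j,t_{j+1})$ the stable type and the layering are both constant, and the diagram is unchanged as a combinatorial object up to ambient isotopy of $\mathbb{R}^3$.

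The core of the argument is the local classification of what happens at each $t_j$. For a surface mapped into $\mathbb{R}^3$ the stable local models are regular sheets, double curves, triple points and branch (Whitney-umbrella) points, and the dimension pair $(2,3)$ lies in Mather's nice dimensions, so these exhaust the stable phenomena. In a generic one-parameter family the degeneracies available at an isolated time are precisely the codimension-one multi-germs: a tangency of two sheets, a tangency producing or cancelling a triple point, the birth or death of a pair of branch points, the passage of a branch point across a sheet, and the momentary quadruple point (the tetrahedral instant), each carrying the finitely many admissible $h$-orderings of the sheets involved. Using the existence of versal unfoldings I would show that near $t_j$ the family $\pi_t$ is, up to local diffeomorphism of source and target, the versal unfolding of the corresponding germ, so that comparing the diagram just before and just after $t_j$ yields exactly one of the seven Roseman moves. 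Concatenating these local pictures across $t_1,\dots,t_m$ produces the finite sequence of moves carrying the diagram of $F_0$ to that of $F_1$.

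I expect the main obstacle to be this last classification step. The transversality and versality inputs are standard, but one must verify that the list of codimension-one multi-singularities, once decorated with the forced layering data, is \emph{complete} and that it collapses to exactly seven combinatorial move types: several distinct strata (differing by orientation or by the labelling of the sheets) must be shown to realise the same Roseman move, and one must confirm that no additional independent transition has been overlooked. This enumeration of the codimension-one multijet strata, together with the careful bookkeeping of the height orderings, is the delicate heart of the theorem.
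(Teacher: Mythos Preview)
The paper does not give its own proof of this lemma: it is quoted as a result of Roseman \cite{Roseman} and used as a black box thereafter. So there is nothing in the paper to compare your argument to.

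That said, your outline is essentially the strategy Roseman follows in the cited reference: perturb an ambient isotopy so that the projected family $\pi_t = p\circ g_t$ is a generic one-parameter family of maps $F\to\mathbb{R}^3$, use Thom--Mather theory to reduce the non-stable instants to finitely many codimension-one events, and then classify those events. Your identification of the stable local models (immersion, double curve, triple point, Whitney umbrella) and of the kinds of codimension-one transitions (sheet tangency, triple-point birth/death, branch-point pair creation, branch point crossing a sheet, tetrahedral quadruple point) is correct in spirit. You are also right that the delicate part is the completeness of this list together with the bookkeeping of the height ordering; Roseman's paper is largely devoted to exactly this enumeration. One point to be careful about: the quadruple-point instant (the tetrahedral move) actually does \emph{not} appear in Roseman's final list of seven moves, because it can be decomposed into the other moves; so your enumeration of transitions is slightly larger than the final list and part of the work is showing such redundancies. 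If you intend to write this up as a proof rather than a citation, that reduction and the verification that no further codimension-one multi-germ has been overlooked are where the real labour lies.
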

We call the local deformations \textit{Roseman moves} or \textit{moves}. Seven types of Roseman moves in \cite{Roseman} can be described by seven moves shown in Figure \ref{Fig(1)} \cite{kaw, Yashiro1}. The deformation from the left to the right is called an $R$-$i^+$ move and the reverse direction is called an $R$-$i^-$ except $R$-$7$.
\begin{figure}[H]
\centering
\captionsetup{font=scriptsize}      
\mbox{\includegraphics[scale=0.4]{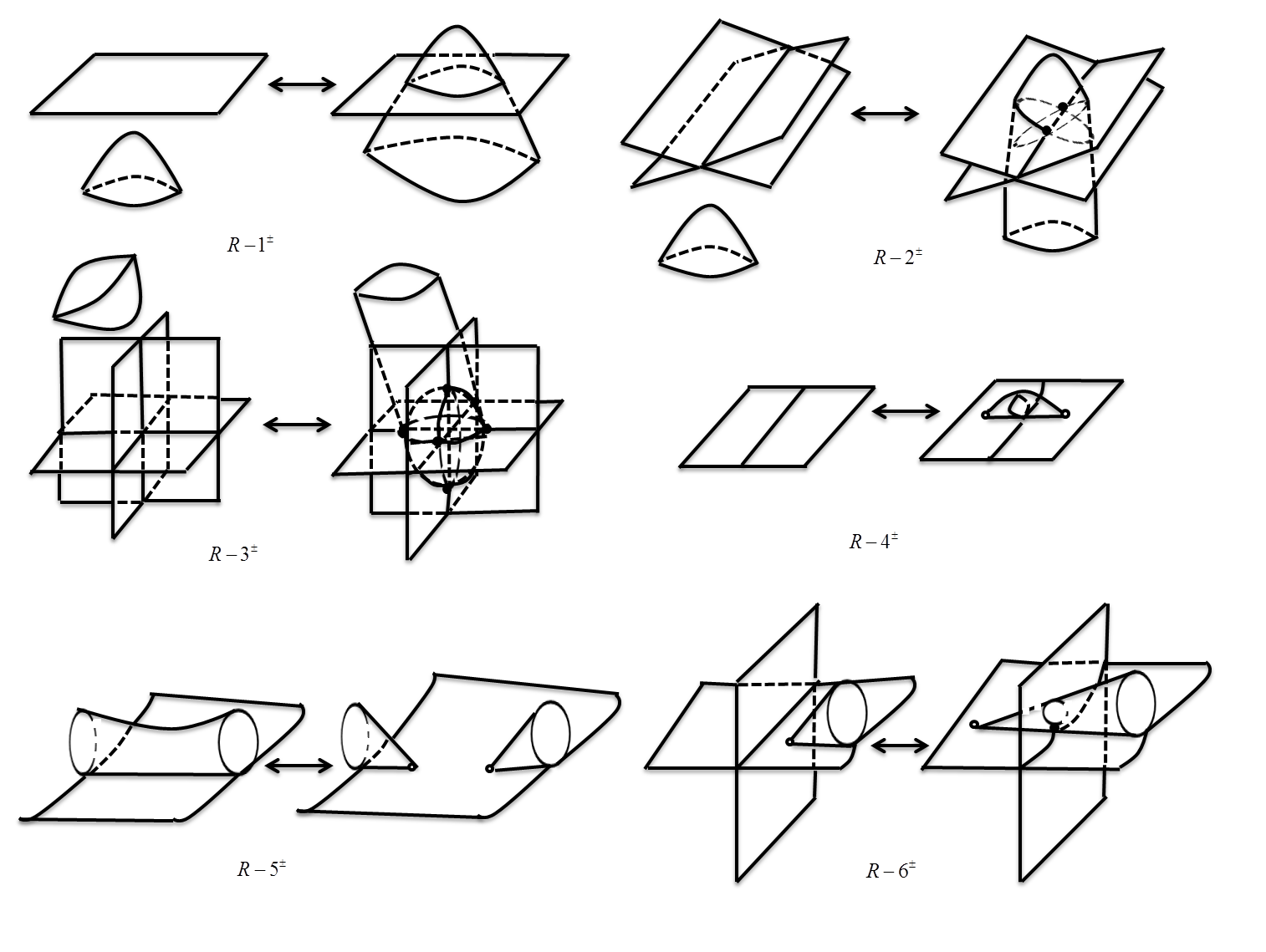}}
\label{4}
\end{figure}
\begin{figure}[H]
\centering
\captionsetup{font=scriptsize}      
\mbox{\includegraphics[scale=0.5]{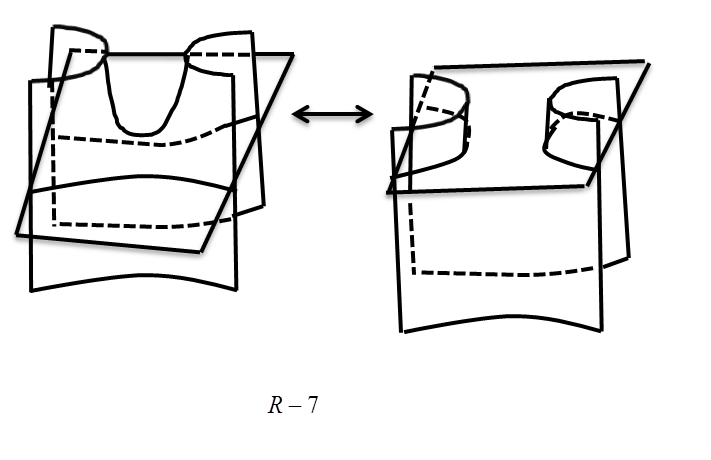}}
 \caption{Roseman moves}
\label{Fig(1)}
\end{figure}
\begin{lem}[\cite{paper_2SatShim}]\label{R6}
Let $\mathcal{E}$ be an edge of a surface-knot diagram whose boundary points are a triple point $\mathcal{T}$ and a branch point $\mathcal{B}$. If $\mathcal{E}$ is a $b/m$- or $m/t$-branch at $\mathcal{T}$, then the triple point $\mathcal{T}$ can be eliminated. 
\end{lem}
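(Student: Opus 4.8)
The plan is to push the branch point $\mathcal{B}$ along $\mathcal{E}$ through the triple point $\mathcal{T}$ and out the other side; this severs the double curve $M\cap B$ near $\mathcal{T}$ and so destroys $\mathcal{T}$. First I would reduce to a single case. Turning the whole diagram upside down, i.e.\ replacing the height function $h$ by $-h$, leaves $p(F)$ unchanged as a subset of $\mathbb{R}^3$ but interchanges the top and bottom sheets at every triple point (the middle sheet stays middle), hence interchanges $b/m$- and $m/t$-branches and carries Roseman moves to Roseman moves; so a sequence of moves eliminating $\mathcal{T}$ in one case yields one in the other. Thus it suffices to treat the case that $\mathcal{E}$ is a $b/m$-branch at $\mathcal{T}$, which I assume from now on.

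Next I would fix the local picture. Choose a $3$-ball neighbourhood $N$ of the closure $\bar{\mathcal{E}}$ in $\mathbb{R}^3$ small enough that $\Delta\cap N$ consists only of the three sheets $T$, $M$, $B$ meeting at $\mathcal{T}$ together with the branch point $\mathcal{B}$, at which $M$ and $B$ become tangent and the double curve $M\cap B$ terminates; thus $M\cap B\cap N=\bar{\mathcal{E}}$, and every double curve of $\Delta\cap N$ lying on $T$ emanates from $\mathcal{T}$ (namely the $m/t$- and $b/t$-branches there). Since $T$ is the top sheet at $\mathcal{T}$, after shrinking $N$ we may assume $M\cup B$ lies strictly below $T$ throughout $N$.

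The move itself is to slide $\mathcal{B}$ along $\mathcal{E}$ towards $\mathcal{T}$ (a harmless isotopy while $\mathcal{B}$ stays away from $\mathcal{T}$), then to carry $\mathcal{B}$ past $\mathcal{T}$ and out along the opposite ray of $M\cap B$, keeping the travelling ``fold'' of the pair $M\cup B$ below $T$ the whole time. The genuine combinatorial change, which takes place as $\mathcal{B}$ crosses $\mathcal{T}$, I would realize by a finite sequence of Roseman moves built from the local pictures of $R$-$2$, $R$-$5$ and $R$-$7$ recalled in Section 3. After the move $M$ and $B$ no longer meet inside $N$, so $T$, $M$ and $B$ have no common point there and $\mathcal{T}$ is gone; no new triple point appears, since the fold is pushed through entirely in the complement of $T$.

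The point that needs real care — and the one that uses the hypothesis — is that the fold of $M\cup B$ can indeed be carried through $B^3(\mathcal{T})$ by Roseman moves without creating triple points. This succeeds precisely because the two sheets merging at $\mathcal{B}$, namely $M$ and $B$, lie on the same side of (below) the remaining sheet $T$, so the fold can be isotoped past $\mathcal{T}$ inside the complement of $T$. If $\mathcal{E}$ were a $b/t$-branch instead, the merging sheets would be $B$ and $T$ with the middle sheet $M$ trapped between them, so $M$ could not be avoided and the argument would break; this is exactly why $b/t$-branches are excluded from the statement.
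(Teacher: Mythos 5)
Your geometric picture (push $\mathcal{B}$ along $\mathcal{E}$ through $\mathcal{T}$, possible exactly because the two sheets merging at $\mathcal{B}$ are adjacent in height so the third sheet is not trapped between them) is the right one, and it is the same picture the paper has in mind. But the crucial step of your argument is exactly where the proposal fails. You claim that the combinatorial change occurring as $\mathcal{B}$ crosses $\mathcal{T}$ can be ``realized by a finite sequence of Roseman moves built from the local pictures of $R$-$2$, $R$-$5$ and $R$-$7$''. No such sequence is exhibited, and in fact none can exist: eliminating $\mathcal{T}$ while creating no new triple points changes the number of triple points of the diagram by exactly one, whereas $R$-$5$ and $R$-$7$ (and $R$-$1$) leave the number of triple points unchanged and $R$-$2$ changes it by two, so any composition of those moves changes the triple point count by an even number. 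Thus the deformation you describe is not a consequence of the moves you allow yourself, and the proposal never establishes that the pushing of $\mathcal{B}$ past $\mathcal{T}$ is a legitimate deformation of the diagram at all, which is the entire content of the lemma.

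The missing ingredient is that this passage of a branch point through the transversal sheet at a triple point is itself one of the seven Roseman moves: in the paper's labelling it is the move $R$-$6$, and the paper's proof consists of a single application of $R$-$6^-$, moving $\mathcal{B}$ along $\mathcal{E}$ and thereby removing $\mathcal{T}$; the hypothesis that $\mathcal{E}$ is a $b/m$- or $m/t$-branch is precisely the applicability condition for that move (your explanation of why a $b/t$-branch is excluded is correct). Your preliminary reduction via $h\mapsto -h$ interchanging the $b/m$ and $m/t$ cases is harmless but unnecessary once $R$-$6^-$ is invoked, since the move covers both cases directly.
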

\begin{proof}
Since $\mathcal{E}$ is a $b/m$- or $m/t$-branch at $\mathcal{T}$,  we can apply the Roseman move $R$-$6^-$ to move the branch point along $\mathcal{E}$. As a result, $\mathcal{T}$ will be eliminated.   
\end{proof}
\subsection{$2$-cancelling pair of triple points}
We need to describe the $R$-$2^-$ move for proving some lemmas in section 6. The \textit{$2$-cancelling pair} is a pair of triple points that can be eliminated by applying the move $R$-$2^-$ indeed.  Let $(\mathcal{T}_1,\mathcal{T}_2)$ be a pair of triple points.  Let $\mathcal{E}_i$ $(i=1,2,3,4,5)$ be a double edge bounded by $\mathcal{T}_1$ and $\mathcal{T}_2$ such that $\mathcal{E}_i$ $(i=1,2,3,4,5)$ is of the same type at both $\mathcal{T}_1$ and $\mathcal{T}_2$. We arrange the double edges $\mathcal{E}_i$'s  so that $C_1=\midpoint{\mathcal{E}}_1 \cup \midpoint{\mathcal{E}}_2$ and $C_2=\midpoint{\mathcal{E}}_3 \cup \midpoint{\mathcal{E}}_4$ form two double point circles in $ \Delta $. Figure \ref{can} (b) shows the connection between the double edges $\mathcal{E}_i$'s  $(i=1,2,3,4,5)$.  In Figure \ref{can} (b), we ignore the over/under information. We consider all possible over/under information. 
\begin{figure}[H]
  \centering
  \subfloat[]{\includegraphics[width=0.2\textwidth]{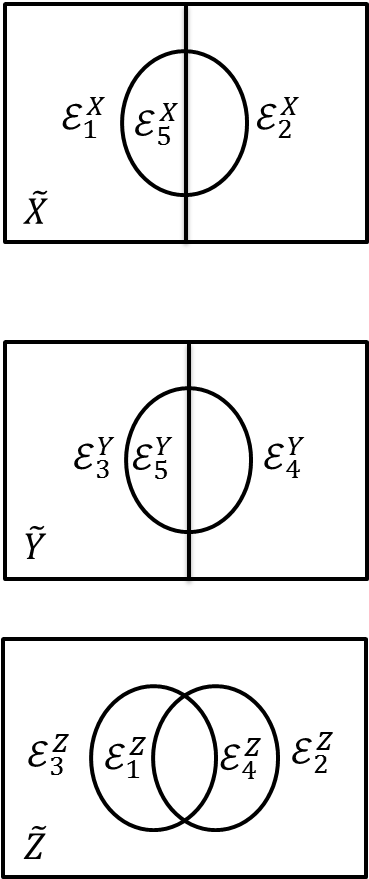}\label{fig:f2}}
   \hfill
   \subfloat[]{\includegraphics[width=0.6\textwidth]{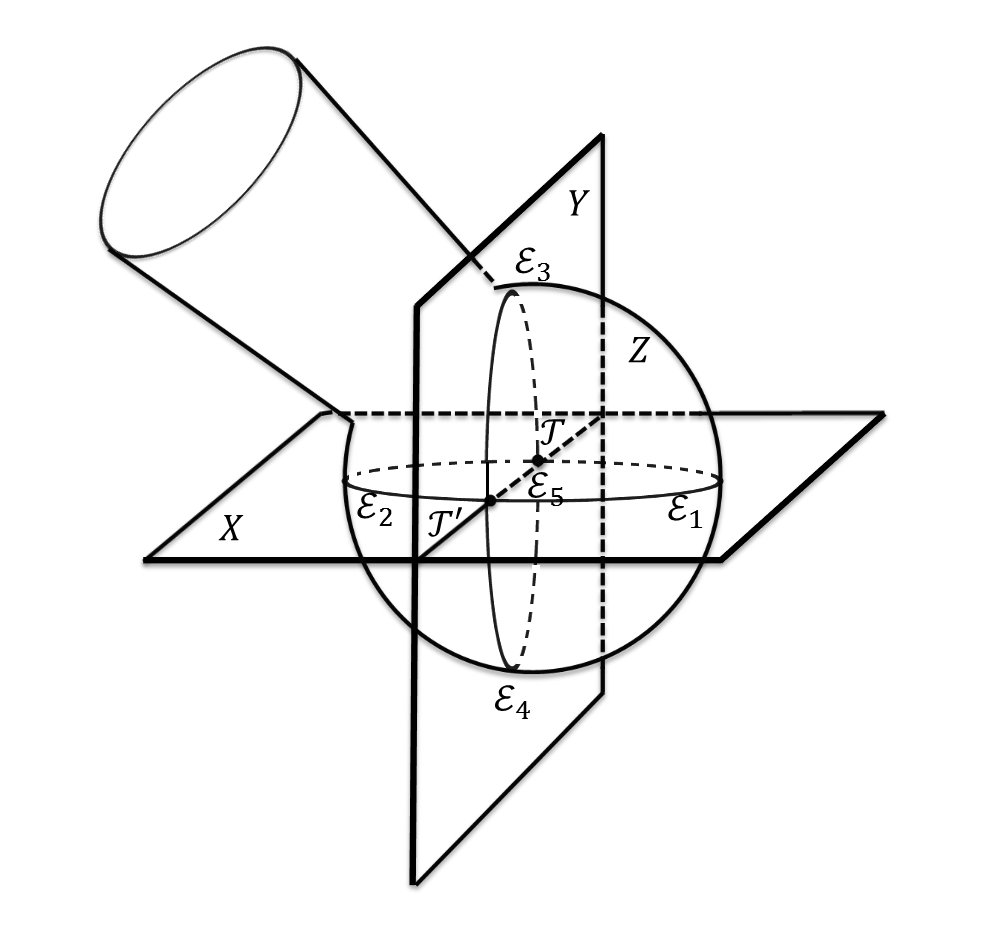}\label{fig:f1}}
  \caption{}
  \label{can}
\end{figure} 
Let $B^3(\mathcal{T}_1,\mathcal{T}_2)$ be a 3-ball in 3-space containing $\mathcal{E}_i$ $(i=1,2,3,4,5)$. Suppose that the pre-image $ (p\mid_{F})^{-1} \big(B^3(\mathcal{T}_1,\mathcal{T}_2) \cap \vert \Delta \vert \big)$ is a union of disjoint three sets $\widetilde{X},\widetilde{Y}$ and $\widetilde{Z} \subset F$.  We label the images of the sets $\widetilde{X},\widetilde{Y}$ and $\widetilde{Z}$ under the projection $p:\mathbb{R}^4 \rightarrow \mathbb{R}^3$ by $X$, $Y$ and $Z$ in $\Delta$, respectively as shown in Figure \ref{can}. By $\mathcal{E}^W$ $(W=X,Y,Z)$, we mean the pre-image of a double edge $\mathcal{E} \subset \Delta$ that is contained in $\widetilde{W}$  $(\widetilde{W}=\widetilde{X},\widetilde{Y},\widetilde{Z})$. Suppose that the following conditions hold:
 \begin{itemize}
 \item[(1)] In $\widetilde{X}$, the closure of $\mathcal{E}_i^{X} \cup \mathcal{E}_5^{X}$ $(i=1,2)$ bounds a disk such that the interior of the disk does not meet the double decker set.
 \item[(2)] In $\widetilde{Y}$, the closure of $\mathcal{E}_i^{Y} \cup \mathcal{E}_5^{Y}$ $(i=3,4)$  bounds a disk such that the interior of the disk does not meet  the double decker set.
 \item[(3)] In $\widetilde{Z}$, the closure of each of $\mathcal{E}_1^{Z} \cup \mathcal{E}_3^{Z}$, $\mathcal{E}_1^{Z} \cup \mathcal{E}_4^{Z}$ and $\mathcal{E}_2^{Z} \cup \mathcal{E}_4^{Z}$ is on the boundary of a disk such that the interior of the disk has empty intersection with the double decker set.
 \end{itemize}
A pair of triple points $(\mathcal{T}_1,\mathcal{T}_2)$ is called a \textit{$2$-cancelling pair} if and only if there is a 3-ball neighbourhood $B^3(\mathcal{T}_1,\mathcal{T}_2)$ in 3-space containing $\mathcal{T}_1$ and $\mathcal{T}_2$  such that the  pre-image $ (p\mid_{F})^{-1} \big(B^3(\mathcal{T}_1,\mathcal{T}_2) \cap \vert \Delta \vert \big)$ satisfies the conditions (1)-(3) above. 

\subsection{Descendent disks and pinch disks}
Let $J=[-1,1]$. Let $M_1$ be $J^2 \times \{0\} \subset J^3$. Let $M_2$ be $\{(x,y,z) \vert z=0.5x^2-2y^2+0.5\} \cap J^3$. The disk in the $yz$-plane bounded by the graphs $\{(0,y,0.5-2y^2) \vert y \in J\}$ and $\{(0,y,0) \vert y \in J\}$ will be denoted by $P_0$. \\
A disk $M$ embedded in $\mathbb{R}^3$ is a \textit{descendent disk} if there is a closed neighbourhood $N(M)$ of $M$ in $\mathbb{R}^3$ such that the pair $(N(M), N(M) \cap \vert \Delta \vert \cup M)$ is homeomorphic to  $(J^3,M_1 \cup M_2 \cup P_0)$ and satisfies the following properties:
\begin{itemize}                                                                                                                                                                                                                                                                                  
\item[(1)] $M \cap \vert \Delta \vert =\partial M=\{\lambda_1,\lambda_2\}$, where $\lambda_1$ and $\lambda_2$ are two simple arcs,  
\item[(2)] $\lambda_1 \cap \lambda_2=\{\mathcal{D}_1,\mathcal{D}_2\}$, where $\mathcal{D}_1$ and $\mathcal{D}_2$ are double points, and
\item[(3)] The double edges containing $\mathcal{D}_1$ and $\mathcal{D}_2$ have opposite orientation with respect to the orientation of the arc $\lambda_1$ or $\lambda_2$. 
\end{itemize}
The pair $(J^3,M_1 \cup M_2 \cup P_0)$ can be viewed as  a model of the neighbourhood of the descendent disk.
If a descendent disk exists, then the Roseman move $R_7$ can be applied. a Subset of $M_2$ is deformed along the descendent disk so that the connection of the double edges is changed and this change is correspondence to a band move.\\
We define embedded cylinder in $\mathbb{R}^3$ with radius $r_i$ by
\[
C(r_i)=\{(x,y,z) | (y-a)^2+(z-b)^2=r_i^2, (0,a,b) \in \text{int}(P_0) \} \cap J^3
\]
Let $\mathcal{C}=\cup _{i=1}^nC(r_i)$ be a finite union of pairwise disjoint closed cylinders embedded in $\mathbb{R}^3$.
\begin{lem}\label{50}
In the notation above, assume that $Q$ is a disk embedded in 3-space such that $Q$ has a closed 3-ball neighbourhood $N(Q)$ with the pair $(N(Q), N(Q) \cap \vert \Delta \vert \cup Q)$ is homeomorphic to $(J^3,M_1\cup M_2 \cup \mathcal{C} \cup P_0)$. Then $\Delta$ can be deformed into $\Delta'$ fixing outside $N(Q)$ such that there is a descendent disk $M \subset Q$ in the closed set of connected regions of $\mathbb{R}^3 \setminus \vert \Delta'\vert$. 
\end{lem}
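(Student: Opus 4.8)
The plan is to induct on the number $n$ of cylinders making up $\mathcal{C}$, peeling them off one at a time until the descendent disk model reappears. For the base case $n=0$ the hypothesis says exactly that $(N(Q),N(Q)\cap\vert\Delta\vert\cup Q)$ is homeomorphic to $(J^3,M_1\cup M_2\cup P_0)$, which is the model of a neighbourhood of a descendent disk; hence $Q$ itself is a descendent disk. Moreover $\text{int}(P_0)$ is connected and disjoint from $M_1\cup M_2$, so it lies in a single region of $J^3\setminus(M_1\cup M_2)$ and $M=Q$ lies in the closure of one connected region of $\mathbb{R}^3\setminus\vert\Delta\vert$; nothing has to be done.

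For the inductive step assume $n\ge 1$. Each $C(r_i)$ has axis parallel to the $x$-axis and passes through a point of $\text{int}(P_0)$, so it meets the copy of $P_0$ sitting inside $Q$ in a circle $c_i$, the $c_i$ being pairwise disjoint; each $c_i$ bounds a disk $\delta_i$ in that copy of $P_0$. Choose $C(r_1)$ so that $\delta_1$ is innermost, i.e.\ $\text{int}(\delta_1)$ contains no $c_j$ with $j\neq 1$. Then $\text{int}(\delta_1)$ is disjoint from the whole singularity set $\vert\Delta\vert=M_1\cup M_2\cup\mathcal{C}$: it misses $M_1\cup M_2$ because $\text{int}(P_0)$ does, it misses $C(r_1)$ because $C(r_1)\cap\{x=0\}=c_1=\partial\delta_1$, and it misses the remaining cylinders by the innermost choice. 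So a thin neighbourhood of $\delta_1$ in $N(Q)$ meets $\vert\Delta\vert$ only in an annular piece of $C(r_1)$ together with the part of $\partial P_0\subset M_1\cup M_2$ near $\partial\delta_1$ — the picture of a single trivial tube piercing an otherwise clean disk.

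Using $\delta_1$ as a guide, I would isotope $C(r_1)$ inside $N(Q)$ so that the cross-section $c_1$ is dragged across $\partial P_0$ — across one of the sheets $M_1$, $M_2$, or across the double curve $M_1\cap M_2$ near a corner of the bigon — and out of $P_0$ altogether. On the level of the diagram this isotopy is a finite sequence of Roseman moves: dragging $c_1$ past $M_1\cap M_2$ creates a cancelling pair of triple points and then deletes it, an $R$-$2^{+}$ followed by an $R$-$2^{-}$, while pushing $c_1$ across a single sheet is the tangency move creating and then deleting a trivial double circle. Because $\text{int}(\delta_1)$ is clean and $C(r_1)$ is innermost, the deformation is supported in a small neighbourhood of $\delta_1$ inside $N(Q)$; it leaves $M_1\cup M_2$ and the other cylinders untouched, and, the sheets involved occupying distinct heights throughout, it is induced by an isotopy of $F$ in $\mathbb{R}^4$. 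After the move $C(r_1)$ is disjoint from $P_0$, so a neighbourhood $N(Q')$ of that copy of $P_0$, taken thin enough to miss the relocated $C(r_1)$, realises a pair homeomorphic to $(J^3,M_1\cup M_2\cup\mathcal{C}''\cup P_0)$ with $\mathcal{C}''=\mathcal{C}\setminus C(r_1)$ a union of $n-1$ cylinders. The inductive hypothesis applied to $N(Q')$ yields a descendent disk $M\subset Q'\subset Q$ in the closure of a complementary region of $\mathbb{R}^3\setminus\vert\Delta'\vert$, and composing the two deformations gives one supported in $N(Q)$, as wanted.

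The main obstacle is the cylinder-removal step itself: making precise that $c_1$ can be pushed out of $P_0$ by Roseman moves supported inside $N(Q)$, without the tube being forced to cross $M_1\cup M_2$ in an uncontrolled way — especially near the corners of the bigon, where $M_1$ and $M_2$ come together — and checking, over all possible over/under configurations, that the only singularities created and destroyed en route are the intended cancelling pair of triple points (or the trivial double circle). Once this local move is established the induction runs automatically, so essentially all the content sits in that one step.
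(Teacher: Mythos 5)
Your route is genuinely different from the paper's. You move the cylinders: choose an innermost intersection circle and drag that tube across $\partial P_0$ out of the disk, repeating until the model of a descendent disk appears. The paper never moves the cylinders at all; instead it pushes a finger of the sheet starting from a point $q_0$ on $\lambda_2$ along an arc $\gamma$ to the innermost circle (applying $R$-$1^+$ each time the finger passes through a cylinder) and then applies the move $R$-$7$ at that circle, so that the circle is removed from the (modified) disk; iterating this ``Procedure I'' and further $R$-$7$ moves turns the modified $Q$ into a descendent disk. The paper's mechanism has the advantage that it is visibly a composition of $R$-$1^+$ and $R$-$7$ moves and manifestly creates only new double edges, never triple points (a fact the paper records and implicitly uses later, since the applications need the deformed diagram to still have exactly the two original triple points). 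Your mechanism is more directly geometric but puts all the weight on the realizability of the drag, which, as you yourself note, you have not established.

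Two concrete points need repair in your version. First, the claim that the deformation ``is supported in a small neighbourhood of $\delta_1$'' and ``leaves the other cylinders untouched'' fails when the circles are nested: innermost does not exclude circles $c_j$ with $\delta_1\subset\delta_j$, and by the Jordan curve theorem the tube $C(r_1)$ cannot reach $\partial P_0$ without crossing every such surrounding cylinder, creating new double curves with it. Either acknowledge these crossings (they are harmless: the cylinders are disjoint from $M_1\cup M_2$, so no triple points arise and the new double curves can be kept off the plane of $Q$), or peel off \emph{outermost} circles instead of innermost ones, for which the exit path meets only $\partial P_0$. Second, the Roseman-move bookkeeping is off: the ends of $C(r_1)$ are pinned on $\partial N(Q)$, so the tube cannot pass entirely through $M_1$ or $M_2$; the double circles created when it is pushed across a sheet \emph{persist} in $\Delta'$ (consistent with the paper's remark that the operation generates new double edges), they are not created and then deleted. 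Likewise the detour through a corner of the bigon, with a cancelling pair of triple points, is both unnecessary and dangerous for the later $t$-minimality arguments; exit through the interior of $\lambda_1$ or $\lambda_2$, after a vertical (fourth-coordinate) isotopy that places the dragged piece of the tube above the sheets it will sweep across, so that no triple point is created at any stage. With these corrections your induction does give the lemma, but as written the key local step is asserted rather than proved, which is exactly the content the paper's Procedure I supplies by a different construction.
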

\begin{proof}
If $\mathcal{C}$ contains a single cylinder $C$, then we can perform a deformation on $N(Q)$ as shown in Figure \ref{des} schematically, where $M$ is indicated by the shaded region. We see that this deformation is a combination of the Roseman moves $R$-$1^+$ and $R$-$7$. If $\mathcal{C}$ contains more than one cylinder,  then we see that $Q$ contains disjoint circles that are the intersection with the cylinders and $Q$. Let $d_0$ be the inner most circle  that is contained in the circle $d_1$ as shown in Figure \ref{des2}. We apply a procedure called Procedure I, to move $d_0$ out from the modified $Q$. Procedure I consists of the following three steps: (1) Take a simple arc $\gamma$ from a point $q_0$ on $\lambda_2$ to a point $q_1$ on $d_0$ such that the intersection of $\gamma$ and circles is the minimum; (2) Move a small disk neighbourhood of $q_0$ in $\Delta$ along $\gamma$ and apply $R$-$1^+$ move when it is needed so that the finger reaches $d_0$, (3) Apply the $R$-$7$ move at the inner most circle $d_0$ so that the modified $Q$ does not include $d_0$. If $d_1$ contains another inner most circle, then we repeat the same process to the modified $\lambda_2$ and the innermost circle as illustrated in Figure \ref{des3}. After all the inner most circles contained in $d_1$ are moved away from the modified $Q$, we apply the Roseman move $R$-$7$ in order to move the circle $d_1$. We repeat Procedure I and $R$-$7$ move as needed till the modified $Q$, denoted by $M$, becomes a descendent disk. 
\end{proof}
\begin{figure}[H]
\centering
\captionsetup{font=scriptsize}      
\mbox{\includegraphics[scale=0.32]{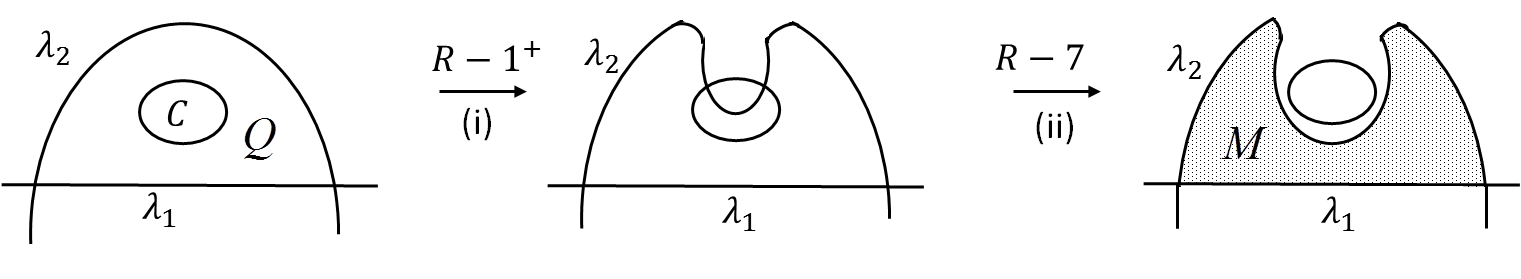}}
 \caption{}
 \label{des}
\end{figure}
\begin{figure}[H]
\centering
\captionsetup{font=scriptsize}      
\mbox{\includegraphics[scale=0.3]{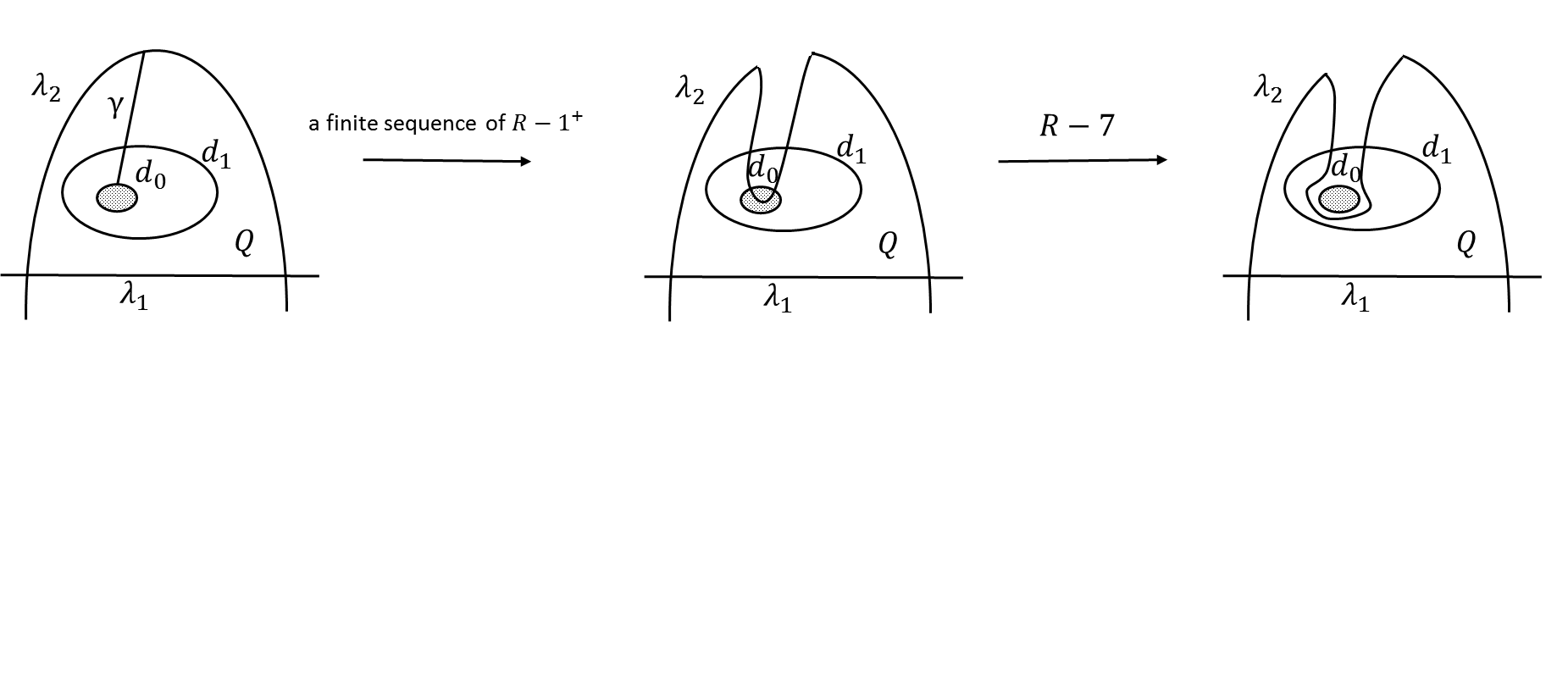}}
 \caption{Procedure I}
 \label{des2}
\end{figure}
\begin{figure}[H]
\centering
\captionsetup{font=scriptsize}      
\mbox{\includegraphics[scale=0.32]{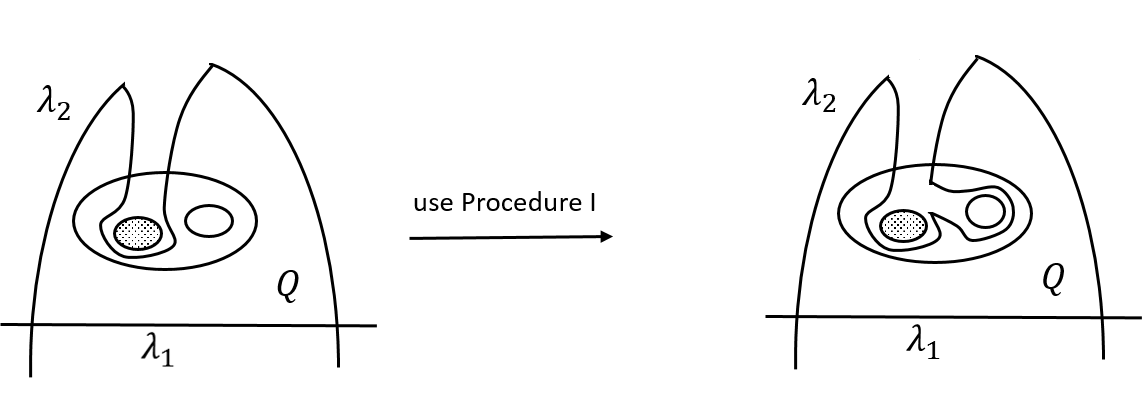}}
 \caption{}
 \label{des3}
\end{figure}
Note that the operation done above to $\Delta$ generates new double edges but never creates triple points. \\ 
Let $I=[0,1]$. Let $\lambda : I \rightarrow J^2$ be an immersion with only one crossing point such that $\lambda(0)=(-1,-1)$, $\lambda(1)=(1,-1)$ and $\lambda(1/4)=\lambda(3/4)=(0,0)$. The loop $\lambda(1/4) \times \{0\}=\lambda(3/4) \times \{0\}$ bounds a disk $P_0$ in $J^2 \times \{0\}$. \\
An embedded disk $P$ in $\mathbb{R}^3$ is a \textit{pinch disk} if there is a closed neighbourhood $N(P)$ of $P$ in $\mathbb{R}^3$ such that the pair $(N(P),N(P) \cap \vert \Delta \vert \cup P)$ is homeomorphic to  $(J^2 \times J, \lambda(I) \times J \cup P_0)$ and satisfies the following properties:
\begin{itemize}
\item[1.] $P \cap \vert \Delta \vert = \partial P$, and 
\item[2.]  $P$ is transversal to $\Delta$ along $\partial P$.
\end{itemize}
The pair $(J^2 \times J, \lambda(I) \times J \cup P_0)$ can be regarded as a model of the neighbourhood of  a pinch disk.
The existence of a pinch disk leads to a deformation of the surface-knot diagram such that a pair of branch points is created. This deformation is correspondence to the Roseman move $R_5^+$.\\
Let $\mathcal{C}=\cup _{i=1}^nC(r_i)$ denote a finite union of closed cylinders embedded in $\mathbb{R}^3$ such that for $i=1,\dots, n$, $C(r_i)$ is defined by $\{(x,y,z) | (x-a)^2+(y-b)^2=r_i^2, (a,b,0) \in \text{int}(P_0) \} \cap J^2 \times J$. 
\begin{lem}\label{51}
In the notation above, assume that $Q$ is a disk embedded in 3-space such that $Q$ has a closed 3-ball neighbourhood $N(Q)$ with the pair $(N(Q), N(Q) \cap \vert \Delta \vert \cup Q)$ is homeomorphic to $(J^2 \times J, \lambda(I) \times J \cup \mathcal{C} \cup P_0)$. Then $\Delta$ can be deformed into $\Delta'$ fixing outside $N(Q)$ such that there is a pinch disk $P \subset Q$ in the closed set of connected regions of $\mathbb{R}^3 \setminus \vert \Delta'\vert$. 
\end{lem}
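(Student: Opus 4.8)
The plan is to follow the proof of Lemma \ref{50} essentially line by line, with the descendent-disk model replaced throughout by the pinch-disk model $(J^2\times J,\,\lambda(I)\times J\cup P_0)$. Write $d_i=C(r_i)\cap Q$ for the circle that the $i$-th cylinder cuts on $Q$. Since the cylinders $C(r_i)$ are pairwise disjoint and embedded, the circles $d_1,\dots,d_n$ are pairwise disjoint, hence nested in the disk $Q$; and a disk $P\subset Q$ whose interior misses every cylinder automatically satisfies $P\cap\vert\Delta\vert=\partial P$ and is transverse to $\Delta$ along $\partial P$, i.e.\ it is a pinch disk lying in the closure of a complementary region of $\mathbb{R}^3\setminus\vert\Delta\vert$. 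So the entire task is to deform $\Delta$, by Roseman moves supported in $N(Q)$, so that the (modified) disk $Q$ no longer meets any cylinder in its interior.

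For $n=1$ this is a standard local picture, handled exactly as the single-cylinder case of Lemma \ref{50} (see Figure \ref{des}): one pushes a small disk of the pinch-sheet $\lambda(I)\times J$ along an arc toward $d_1$ by an $R$-$1^+$ move and then performs the band move $R$-$7$ at $d_1$, after which $d_1$ no longer lies on the modified $Q$. Being a local, finger-type composition of $R$-$1^+$ and $R$-$7$, this creates new double edges but no new triple points.

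For $n\ge 2$ I would invoke Procedure I from the proof of Lemma \ref{50} verbatim, with the arc $\lambda_2$ there replaced by an arc on the boundary circle $\partial P_0$ of the candidate pinch disk, chosen to avoid the single point where $\partial P_0$ meets the self-intersection line of $\lambda(I)\times J$. Concretely: pick an innermost circle $d_0$, say nested inside $d_1$; choose a simple arc $\gamma\subset Q$ from a point $q_0$ on that boundary arc to a point $q_1\in d_0$ meeting the family $\{d_i\}$ minimally; slide a small disk neighbourhood of $q_0$ along $\gamma$ using $R$-$1^+$ moves as needed until the finger reaches $d_0$; and then apply $R$-$7$ at $d_0$ so that the modified $Q$ no longer contains $d_0$ (as in Figures \ref{des2}, \ref{des3}). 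Iterating over all innermost circles inside $d_1$ and then applying $R$-$7$ at $d_1$ removes $d_1$ as well; since each removal strictly decreases the number of cylinders meeting $Q$, finitely many repetitions reduce the modified $Q$ to a pinch disk $P$. As every step is a composition of $R$-$1^+$ and $R$-$7$, the resulting diagram $\Delta'$ has exactly the triple points of $\Delta$.

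The main obstacle, just as in Lemma \ref{50}, is the bookkeeping: one must check that each finger move along $\gamma$ can be carried out without creating a triple point even when $\gamma$ is forced to cross some of the circles $d_i$, and that after each application of $R$-$7$ the local model around the modified $Q$ again has the form $(J^2\times J,\,\lambda(I)\times J\cup\mathcal{C}'\cup P_0)$ with one fewer cylinder, so that the induction closes. There is one extra wrinkle compared with the descendent-disk case, namely that $\partial P$ meets the double curve of the pinch-sheet in a single point rather than being disjoint from it, so the base points $q_0$ of the fingers must be taken away from that point; this is easily arranged and leaves the rest of the argument identical to that of Lemma \ref{50}.
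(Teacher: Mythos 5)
Your proposal is correct and matches the paper's intent exactly: the paper's own proof of Lemma \ref{51} is simply the statement that it is similar to Lemma \ref{50}, and you carry out precisely that adaptation (single-cylinder case via $R$-$1^+$ and $R$-$7$, then Procedure I for nested circles) with the descendent-disk model replaced by the pinch-disk model. Your extra remark about keeping the finger base points away from the point where $\partial P_0$ meets the double curve of $\lambda(I)\times J$ is a detail the paper leaves implicit, but it does not change the argument.
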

\begin{proof}
The proof is similar to Lemma \ref{50}.
\end{proof}

\section{Numbers of triple points}
Suppose $\Delta$ is a $t$-minimal surface-knot diagram of the surface-knot $F$. Let $\mathcal{T}$ be a triple point of $\Delta$. From Lemma \ref{R6}, the other endpoint of any of the $b/m$- or $m/t$-branches at $\mathcal{T}$ must be a triple point. We classify triple points of $\Delta_t$ according to the other boundary points of the $b/t$-branches. At $\mathcal{T}$, the sheet transverse to the $b/t$-branches is the middle sheet.  Let $\mathcal{E}_1$ and $\mathcal{E}_2$ be the $b/t$-branches at  $\mathcal{T}$ such that the orientation normal to the middle sheet points from $\mathcal{E}_1$ towards $\mathcal{E}_2$. We say that the type of the triple point $\mathcal{T}$ is
\begin{itemize}
\item[$\langle0\rangle$] if the other boundary point of both $\mathcal{E}_1$ and $\mathcal{E}_2$ is a triple point,
\item[$\langle2 \rangle$] if the other boundary point of $\mathcal{E}_1$ is a triple point, while the other boundary point of $\mathcal{E}_2$ is a branch point,
\item[$\langle5\rangle$] if the other boundary point of $\mathcal{E}_2$ is a triple point, while the other boundary point of $\mathcal{E}_1$ is a branch point,
\item[$\langle25\rangle$] if the other boundary point of both $\mathcal{E}_1$ and $\mathcal{E}_2$ is a branch point.
\end{itemize}
We denote by $t_{\omega}^{\epsilon}(\lambda)$ the number of triple points of $\Delta$ with the sign $\epsilon$, type $\langle\omega\rangle$ and Alexander numbering $\lambda$. Let $t_w(\lambda)$ be the sum of signs for all triple points of type $\langle\omega\rangle$ with Alexander numbering $\lambda$. Satoh in \cite{paper_2Sat2} found the following obstruction on the projection of a surface-knot.
\begin{equation}
t_0(\lambda)+2t_2(\lambda)+t_5(\lambda)+2t_{25}(\lambda)=t_0(\lambda +1)+t_2(\lambda +1)+2t_5(\lambda +1)+2t_{25}(\lambda +1) 
\label{x}
\end{equation}
As a direct consequence of Equation (\ref{x}), we have the following lemma.
\begin{lem}[\cite{paper_2Sat3}]\label{equa}
Assume that $F$ is a surface-knot with $t(F)=2$. Let $\Delta$ be a $t$-minimal surface-knot diagram of $F$ whose triple points are $\mathcal{T}_1$ and $\mathcal{T}_2$. Then $\mathcal{T}_1$ and $\mathcal{T}_2$ are of the same type with $\epsilon(\mathcal{T}_1)=-\epsilon(\mathcal{T}_2)$ and $\lambda({\mathcal{T}_1})=\lambda(\mathcal{T}_2)$.
\end{lem}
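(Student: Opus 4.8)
The plan is to deduce all three conclusions directly from Satoh's identity (\ref{x}), evaluated at a few consecutive Alexander indices, using only the fact that a $t$-minimal diagram of $F$ has exactly the two triple points $\mathcal{T}_1,\mathcal{T}_2$. First I would fix notation: write $\lambda_i=\lambda(\mathcal{T}_i)$, $\epsilon_i=\epsilon(\mathcal{T}_i)\in\{+1,-1\}$, and let $\omega_i\in\{0,2,5,25\}$ denote the type of $\mathcal{T}_i$. For an integer $\lambda$ put $f(\lambda)=t_0(\lambda)+2t_2(\lambda)+t_5(\lambda)+2t_{25}(\lambda)$ and $g(\lambda)=t_0(\lambda)+t_2(\lambda)+2t_5(\lambda)+2t_{25}(\lambda)$, so that (\ref{x}) becomes $f(\lambda)=g(\lambda+1)$ for every $\lambda$. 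Since $\Delta$ has only these two triple points, $t_\omega(\lambda)$ is the sum of $\epsilon_i$ over all $i$ with $\omega_i=\omega$ and $\lambda_i=\lambda$; in particular it is $0$ unless $\lambda\in\{\lambda_1,\lambda_2\}$, and each $\mathcal{T}_i$ contributes a single $\pm1$ to exactly one of $t_0,t_2,t_5,t_{25}$ at the level $\lambda_i$.

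The first step is to force $\lambda_1=\lambda_2$. Assume without loss of generality $\lambda_1\le\lambda_2$ and evaluate (\ref{x}) at $\lambda=\lambda_1-1$. The left side $f(\lambda_1-1)$ vanishes, since no triple point has Alexander number $\lambda_1-1$; the right side is $g(\lambda_1)$, and $\mathcal{T}_1$ contributes $d_{\omega_1}\epsilon_1$ to it, where $(d_0,d_2,d_5,d_{25})=(1,1,2,2)$. If $\lambda_2>\lambda_1$, then $\mathcal{T}_1$ is the only triple point at level $\lambda_1$, so $0=g(\lambda_1)=d_{\omega_1}\epsilon_1\neq0$, a contradiction. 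Hence $\lambda_1=\lambda_2=:\lambda$. Now evaluate (\ref{x}) at $\lambda-1$ and at $\lambda$. At $\lambda-1$ the left side is $0$ and the right side is $g(\lambda)$, giving $d_{\omega_1}\epsilon_1+d_{\omega_2}\epsilon_2=0$. At $\lambda$ the right side $g(\lambda+1)$ is $0$ (no triple point has Alexander number $\lambda+1$) and the left side is $f(\lambda)$, giving $c_{\omega_1}\epsilon_1+c_{\omega_2}\epsilon_2=0$ with $(c_0,c_2,c_5,c_{25})=(1,2,1,2)$. Since all the weights $c_\omega,d_\omega$ are positive, the relation $c_{\omega_1}\epsilon_1+c_{\omega_2}\epsilon_2=0$ forces $\epsilon_1=-\epsilon_2$ and $c_{\omega_1}=c_{\omega_2}$, and then $d_{\omega_1}\epsilon_1+d_{\omega_2}\epsilon_2=0$ forces $d_{\omega_1}=d_{\omega_2}$. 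Finally, the four pairs $(c_\omega,d_\omega)$ --- namely $(1,1),(2,1),(1,2),(2,2)$ for $\omega=0,2,5,25$ respectively --- are pairwise distinct, so $c_{\omega_1}=c_{\omega_2}$ together with $d_{\omega_1}=d_{\omega_2}$ yields $\omega_1=\omega_2$. This gives all three claims.

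I do not expect a serious obstacle here: the real content is already packed into Satoh's equation (\ref{x}), and the lemma is obtained from it by a short finite verification. The only delicate point is the bookkeeping --- pairing each $t_\omega$ with the correct coefficient on each side of (\ref{x}), dealing with the a priori possibility $\lambda_1\neq\lambda_2$ before it is excluded, and recording the elementary observation that the four coefficient pairs $(c_\omega,d_\omega)$ are distinct, which is precisely what promotes ``equal weights'' to ``equal types''.
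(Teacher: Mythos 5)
Your argument is correct: with only two triple points, evaluating Satoh's identity (\ref{x}) at the levels $\lambda_1-1$, $\lambda-1$ and $\lambda$ does force $\lambda_1=\lambda_2$, $\epsilon_1=-\epsilon_2$, and (since the coefficient pairs $(1,1),(2,1),(1,2),(2,2)$ are pairwise distinct) equality of types. This is exactly the ``direct consequence of Equation (\ref{x})'' that the paper invokes, merely deferring the verification to Satoh's paper, so your proposal fills in the same intended argument rather than taking a different route.
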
 
\begin{proof}
A proof can be found in \cite{paper_2Sat3}.
\end{proof}
\section{The algebraic intersection number of first homology elements of the torus}
Let $\mathbb{S}^{1}$ be the unit circle with the positive orientation. Let $T=\mathbb{S}^{1}\times\mathbb{S}^{1}$ be the standard torus. Assume that $l_1$ and  $l_2$ are simple closed curves in $T$ that intersect transversally at some isolated crossing points. The algebraic intersection number between  $l_1 $ and  $l_2 $ is defined as follows  
\begin{defn}
Let $p$ be a point of intersection between $l_1 $ and  $l_2 $.  The  intersection index assigned to  $p$, denoted by  $i_p ( l_1 , l_2)$,  is $+1$ if the tangent vectors to the pair $(l_1,l_2)$ form an oriented basis for the tangent plane at that point $p$
and $-1$ otherwise. Then the \textit{algebraic intersection number} between $l_1 $ and  $l_2$ , denoted by $I (l_1, l_2 )$,  is defined by the sum of the indices of the intersection points of $l_1 $ and  $l_2 $, that is 
\[I ( l_1,l_2 )= \sum_{p \in l_1 \cap l_2} i_p ( l_1, l_2 )
\]
\end{defn}
Two simple closed curves $l_1$ and $l_2$ in $T$ are said to be \textit{homologous} if $l_{1}-l_{2}$ bounds a 2-chain of the chain group of $T$. We use $[l_1]=[l_2]$ to indicate that $l_1$ and $l_2$ are homologous. Note that the algebraic intersection number depends only on the homology classes. 
Let $[l]$ be an element of the first homology group of the torus, $H_1(T)=\mathbb{Z} \times \mathbb{Z}$. In fact, $l$ is a simple closed curve in the torus which can be represented as some point $(p,q) \in \mathbb{Z} \times \mathbb{Z}$. 
\begin{thm}[\cite{cup}]
For the torus $T$, the algebraic intersection number of two simple closed curves  $(p,q)$ and $(p',q')$ is given by
\[
I \big( (p,q),(p',q')\big)=pq'-p'q
\]
\end{thm}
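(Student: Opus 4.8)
The plan is to reduce the statement to a computation on the two standard generators of $H_1(T)$ by exploiting bilinearity of the intersection pairing. Write $a=(1,0)$ and $b=(0,1)$ for the homology classes of the meridian $\mathbb{S}^1\times\{*\}$ and the longitude $\{*\}\times\mathbb{S}^1$, so that $H_1(T)\cong\mathbb{Z}a\oplus\mathbb{Z}b$ and $[(p,q)]=pa+qb$. Since, as already remarked, $I$ depends only on homology classes, it suffices to know that $I(\cdot,\cdot)$ is $\mathbb{Z}$-bilinear and skew-symmetric together with the three values $I(a,a)$, $I(b,b)$ and $I(a,b)$; then
\[
I\big((p,q),(p',q')\big)=I(pa+qb,\,p'a+q'b)=pp'\,I(a,a)+pq'\,I(a,b)+qp'\,I(b,a)+qq'\,I(b,b),
\]
and the claimed formula follows once we show $I(a,a)=I(b,b)=0$ and $I(a,b)=1$.

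First I would establish bilinearity and skew-symmetry. The cleanest route, matching the reference \cite{cup}, is to identify $I$ with the intersection form on the closed oriented surface $T$, which is Poincar\'e dual to the cup product $H^1(T)\times H^1(T)\to H^2(T)\cong\mathbb{Z}$; bilinearity is then automatic and skew-symmetry is the graded-commutativity of the cup product in odd degree. Alternatively, one can argue geometrically: represent a primitive class $pa+qb$ (so $\gcd(p,q)=1$) by the image under $\mathbb{R}^2\to\mathbb{R}^2/\mathbb{Z}^2=T$ of the straight line through the origin with direction $(p,q)$; two such straight-line representatives for $(p,q)$ and $(p',q')$ meet transversally in exactly $|pq'-p'q|$ points, all with the same intersection index because the tangent direction is constant along each curve. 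This already gives $I\big((p,q),(p',q')\big)=\pm(pq'-p'q)$ for primitive classes, and bilinearity at the level of homology extends it to all of $\mathbb{Z}^2$.

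It remains to compute on generators and to fix the sign. The meridian and longitude meet in a single point, so $|I(a,b)|=1$; taking the orientation of $T$ induced by the ordered pair (meridian, longitude) --- equivalently, ordering the tangent directions as $((1,0),(0,1))$ --- gives $i_p(a,b)=+1$ and hence $I(a,b)=1$, while a simple closed curve can be isotoped off a parallel copy of itself, so $I(a,a)=I(b,b)=0$. Substituting these values into the bilinear expansion yields $I\big((p,q),(p',q')\big)=pq'-p'q$.

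The main obstacle I anticipate is bookkeeping rather than conceptual: making the passage from \emph{primitive} simple closed curves to arbitrary homology classes $(p,q)$ clean, since a genuine simple closed curve in class $(p,q)$ exists only when $\gcd(p,q)=1$, so one must either phrase bilinearity at the level of homology (invoking the cup-product description) or be content with the straight-line model on the universal cover. Pinning down that the sign is $+1$ and not $-1$ --- that is, committing to a fixed orientation convention on $T$ and checking it is the one used in the definition of $i_p$ --- is the other point that needs care.
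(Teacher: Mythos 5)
Your proposal is correct. Note, however, that the paper does not prove this statement at all: it is quoted as a known fact with a citation to Guillemin--Pollack, so there is no ``paper proof'' to compare against. Your argument --- reduce to the generators $a=(1,0)$, $b=(0,1)$ by bilinearity and skew-symmetry of the intersection pairing (via Poincar\'e duality/cup product, or directly), compute $I(a,a)=I(b,b)=0$ and $I(a,b)=+1$ with the orientation convention of the definition of $i_p$, and expand --- is the standard proof and is sound. Two small remarks. First, the worry about non-primitive classes is moot for the statement as phrased: a simple closed curve on the torus is either null-homotopic or represents a primitive class, so only $\gcd(p,q)\in\{0,1\}$ actually occurs; your bilinearity-at-the-level-of-homology formulation covers the general case anyway. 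Second, in the straight-line model the sign does not need to be fixed separately: if the two curves are images of lines with direction vectors $(p,q)$ and $(p',q')$, then at every intersection point the index is the sign of $\det\begin{pmatrix} p & q\\ p' & q'\end{pmatrix}=\operatorname{sgn}(pq'-p'q)$, and the number of intersection points is $|pq'-p'q|$, so the total is exactly $pq'-p'q$ with no ambiguity; this slightly streamlines your final step of pinning down the sign on generators.
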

In the next two sections, we will present some figures in which a box in a simple closed curve means that it might be twisted or knotted.
\section{Lemmas}
Throughout this section, $F$ is assumed to be a genus-one surface-knot with $t(F)=2$. Also suppose  $\Delta$ is a $t$-minimal surface-knot diagram of $F$ whose triple points are $\mathcal{T}_1$ and $\mathcal{T}_2$. By Lemma \ref{equa},  $\mathcal{T}_1$ and $\mathcal{T}_2$ have same Alexander numbering and with opposite signs. In each lemma of this section, we show that $\Delta$ can be transformed to a diagram of $F$ with no triple points. This contradicts the assumption that $\Delta$ is a $t$-minimal diagram and so we get the result in each lemma that $t(F)=0$.

\begin{lem}\label{cancelling}
Suppose $\mathcal{E}_i$ $(i=1,2,3,4,5,6)$ is a double edge in $\Delta$ bounded by $\mathcal{T}_{1}$ and $\mathcal{T}_{2}$ such that it is of the same type at both triple points. Then $t(F)=0$.
\end{lem}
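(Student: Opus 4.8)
The plan is to prove that the pair $(\mathcal{T}_1,\mathcal{T}_2)$ is a $2$-cancelling pair, possibly after a preliminary deformation of $\Delta$ that creates no triple point; then the Roseman move $R$-$2^-$ deletes both triple points, producing a diagram of $F$ with no triple point, so $t(F)=0$. First I would settle the combinatorics. At any triple point the six incident branches come in three pairs of mutually opposite branches, one pair of each type $b/m$, $b/t$, $m/t$. Since all six branches at $\mathcal{T}_1$ are ends of $\mathcal{E}_1,\dots,\mathcal{E}_6$ and each $\mathcal{E}_i$ has the same type at both of its ends, exactly two of the $\mathcal{E}_i$ are of each type, and the two edges of a given type are opposite at $\mathcal{T}_1$ and opposite at $\mathcal{T}_2$. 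Hence the closures of those two edges form a double point circle meeting $\mathcal{T}_1$ and $\mathcal{T}_2$ once each (this is compatible with Lemma \ref{even}, and no other way of joining the six edges into double point curves occurs). So the part of the singularity set incident to $\mathcal{T}_1,\mathcal{T}_2$ is the union of three double point circles $C_1=\midpoint{\mathcal{E}}_1\cup\midpoint{\mathcal{E}}_2$, $C_2=\midpoint{\mathcal{E}}_3\cup\midpoint{\mathcal{E}}_4$, $C_3=\midpoint{\mathcal{E}}_5\cup\midpoint{\mathcal{E}}_6$, which after relabelling may be taken so that $C_1$ is of type $b/m$, $C_2$ of type $b/t$, $C_3$ of type $m/t$; this realises the incidence configuration in the definition of a $2$-cancelling pair. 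Any other component of the singularity set of $\Delta$ carries no triple point.

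Next I would lift the singularity set to the double decker set on $F$, a torus. The six decker curves split into three pairs, the pair of $C_a$ lying in the two sheets that come together along $C_a$; the two decker curves of a pair cross transversally exactly at the two points over $\mathcal{T}_1$ and $\mathcal{T}_2$, and decker curves of different pairs are disjoint (distinct double point curves meet only at triple points, and sheets meet only along decker curves). The two crossings cut each pair into four decker arcs. Verifying conditions (1)--(3) for a $2$-cancelling pair amounts to showing that prescribed loops built from two such arcs glued at the two triple-point preimages bound disks in $F$ whose interior is disjoint from the double decker set. This is where the genus-one hypothesis enters. By Lemma \ref{equa}, $\epsilon(\mathcal{T}_1)=-\epsilon(\mathcal{T}_2)$, so at the two crossings of each decker pair the intersection indices are opposite, and by the algebraic intersection number formula on the torus (Section 5) each such pair of decker curves has algebraic intersection number $0$. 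On the torus this constrains the homology classes enough to force the decker curves relevant to (1)--(3), namely $C_1^{U},C_1^{L},C_2^{U},C_2^{L}$, to be null-homologous, hence each bounds a disk in $F$, and inside these disks the arcs of a pair can be taken parallel; running through the orientation conventions for double edges (the BW orientation of Figure \ref{ori}) then identifies the loops in (1)--(3) with boundaries of such disks. Finally, the interior of each of these disks meets the double decker set only in triple-point-free double decker circles, and by Lemmas \ref{50} and \ref{51}, which apply only $R$-$7$ and $R$-$5^+$ and create no triple point, these circles can be pushed off the disks, making the interiors clean. Then $(\mathcal{T}_1,\mathcal{T}_2)$ is a $2$-cancelling pair, $R$-$2^-$ removes $\mathcal{T}_1$ and $\mathcal{T}_2$, and we are done.

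I expect the main obstacle to be the homological step on the torus: showing that, for every admissible assignment of over/under data at $\mathcal{T}_1$ and $\mathcal{T}_2$, the decker curves forced by the configuration are null-homologous (equivalently, that the precise loops demanded in (1)--(3) bound disks), and that these bounding disks can be chosen with the relevant arcs in parallel position. This is exactly where genus one is essential, since on a surface of higher genus two essential simple closed curves can meet transversally in two points with algebraic intersection $0$ without cutting off a bigon. Once the disks are known to exist, clearing their interiors with Lemmas \ref{50} and \ref{51} is routine, and the passage from a $2$-cancelling pair to a triple-point-free diagram via $R$-$2^-$ is immediate from the definitions.
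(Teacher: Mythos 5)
Your high-level plan --- turn $(\mathcal{T}_1,\mathcal{T}_2)$ into a $2$-cancelling pair by triple-point-free deformations and then apply $R$-$2^-$ --- is the same as the paper's, but the homological step you rely on has genuine gaps. First, the decker combinatorics is off: the upper and lower decker curves of a single double point circle do not cross each other over the triple points. Over $\mathcal{T}_j$ the crossings in $F$ are $\mathcal{T}_j^T\in C_1^U\cap C_3^U$, $\mathcal{T}_j^M\in C_1^L\cap C_2^U$ and $\mathcal{T}_j^B\in C_2^L\cap C_3^L$ (with $C_1,C_2,C_3$ the $m/t$-, $b/m$- and $b/t$-circles), so the curves meeting twice are deckers of \emph{different} circles lying in a common sheet. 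Second, and more seriously, vanishing algebraic intersection number does not force these curves to be null-homologous: on the torus two \emph{essential} simple closed curves can be homologous, hence have intersection number $0$, and still cross transversally in two points of opposite sign. So your conclusion that $C_1^U,C_1^L,C_2^U,C_2^L$ bound disks does not follow, and the paper never asserts it. What the paper actually proves (Claim 1) is only that some loop made of two lower decker arcs, $\midpoint{\mathcal{E}_i^L}\cup\midpoint{\mathcal{E}_j^L}$ with $(i,j)\in\{3,4\}\times\{5,6\}$, bounds a disk: if all four such loops were essential they would be pairwise disjoint, hence mutually homologous, while the configuration forces $[b]=[a]+[c]+[d]$, giving $2[a]=0$ in $H_1(F)\cong\mathbb{Z}\oplus\mathbb{Z}$, a contradiction. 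That weaker statement is the genus-one input.

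The second gap is the passage from the existence of such a disk to conditions (1)--(3). This is not ``orientation bookkeeping'' plus clearing closed double curves out of disk interiors with Lemmas \ref{50} and \ref{51}: in general the pair is simply not $2$-cancelling as the diagram stands, and the paper must \emph{change} it. It constructs descendent disks parallel to $E=p(\widetilde{E})$ and applies $R$-$7$ along them; this re-routes the double edges (a band move on the double decker set) and produces new decker arcs $\mathcal{E}_1^{U'},\mathcal{E}_2^{U'}$ for which condition (1) holds, and the procedure is then repeated to arrange the remaining conditions before $R$-$2^-$ can be applied. Your proposal omits this re-routing entirely --- you use the descendent/pinch-disk lemmas only to empty the interiors of disks you assumed (incorrectly) to exist --- so the step from ``some disks exist'' to ``conditions (1)--(3) of a $2$-cancelling pair hold'' is missing.
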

\begin{proof}
Since $\mathcal{E}_i$ $(i=1,2,3,4,5,6)$ is of the same type at both triple points, we may arrange the double edges $\mathcal{E}_i$'s so that each of $\mathcal{E}_1$ and $\mathcal{E}_2$ is the $m/t$-branch at $\mathcal{T}_{1}$ and $\mathcal{T}_2$. Let $\mathcal{E}_3$ and $\mathcal{E}_4$ be the $b/m$-branches at both triple points and let $\mathcal{E}_5$ and $\mathcal{E}_6$ denote the $b/t$-branches. We obtain three double point circles in $\Delta$, namely $C_1=\midpoint{\mathcal{E}}_1 \cup \midpoint{\mathcal{E}}_2$, $C_2=\midpoint{\mathcal{E}}_3 \cup \midpoint{\mathcal{E}}_4$ and $C_3=\midpoint{\mathcal{E}}_5 \cup \midpoint{\mathcal{E}}_6$. The double decker set and its projected image are shown in Figure \ref{can1}. If $(\mathcal{T}_{1},\mathcal{T}_{2})$ forms a $2$-cancelling pair in $\Delta$, then $\mathcal{T}_{1}$ and $\mathcal{T}_{2}$ can be eliminated by the move $R$-$2^-$ and so we have a contradiction to the assumption that  $\Delta$ is a $t$-minimal diagram.\\
Suppose on the other hand that the pair $(\mathcal{T}_{1},\mathcal{T}_{2})$ does not form a $2$-cancelling pair. In this case, we need to prove that $\Delta$ can be transformed by a finite sequence of Roseman moves into a diagram of $F$ with two triple points forming $2$-cancelling pair.  This can be done as follows.  First, we suppose that $C_i^U$ $(i=1,2,5)$ are contained in the set $\widetilde{W} \subset F$, where $\widetilde{W}$ corresponds to the set $\widetilde{X}$ of the condition (1) and we assume that (1) does not hold. \\
We establish the claim below that is necessary to prove the condition (1) \\
{\bf{Claim 1:}} There exists $(i,j) \in \{3,4\} \times \{5,6\}$ such that $\midpoint{\mathcal{E}_i^L} \cup \midpoint{\mathcal{E}_j^L}$ bounds a disk in $F$.\\
{\bf{Proof of Claim 1:}} Let  $L=\{a,b,c,d\}$ be a set of oriented closed paths in $F$ such that $a,b,c$ and $d$ are contained in a tubular neighbourhood of $\midpoint{\mathcal{E}_3^L} \cup \midpoint{\mathcal{E}_{5}^L}$, $\midpoint{\mathcal{E}_3^L} \cup \midpoint{\mathcal{E}_{6}^L}$,  $\midpoint{\mathcal{E}_4^L} \cup \midpoint{\mathcal{E}_{5}^L}$ and $\midpoint{\mathcal{E}_4^L} \cup \midpoint{\mathcal{E}_{6}^L}$, respectively  (see Figure \ref{claim}, where the elements of $L$ are denoted by dotted loops).  Suppose that for all $l \in L$,  $l$ represents a non-trivial class of $H_{1}(F)$. The elements of $L$ are pairwise disjoint by the definition. Therefore,  we have $[l_1]=[l_2]$ for distinct $l_1,l_2 \in L$.  Let the regions bounded by $a$, $b$, $c$ and $d$ be oriented as shown in Figure \ref{claim}. We obtain $[b]=[a]+[c]+[d]$. But this contradicts the fact that $[a],[b],[c]$ and $[d]$ are all homologous in $H_1(F) \cong \mathbb{Z} \oplus \mathbb{Z}$.  \\ \\  
From Claim 1, we may assume that  $\midpoint{\mathcal{E}_3^L} \cup \midpoint{\mathcal{E}_5^L}$ bounds a disk in $F$, denoted by $\widetilde{E}$. Let $N(\widetilde{E})$ be a 2-ball neighbourhood of $\widetilde{E}$ in $F$. Let $E=p(\widetilde{E})$ and $\widehat{E}=p\big(N(\widetilde{E})\big)$, where $p$ is the orthogonal projection  (see Figure \ref{can1}). In $\vert \Delta \vert$, we define $\mathcal{E}_+$ and $\mathcal{E}_- \subset \widehat{E}$ by $\mathcal{E}_+=(\mathcal{E}_5 \cup \mathcal{E}_6) \cap \widehat{E}$ and $\mathcal{E}_-=(\mathcal{E}_3 \cup \mathcal{E}_4) \cap \widehat{E}$. Let $J=[-1,1]$. Take closed neighbourhoods $N(\mathcal{E}_3^U)$ and $N(\mathcal{E}_5^U)$ in $F$ of $\mathcal{E}_3^U$ and $\mathcal{E}_5^U$, respectively such that 
      
\begin{itemize}
\item[-] $p\big(N(\mathcal{E}_3^U)\big) \cong \mathcal{E}_- \times J$, where $\mathcal{E}_- \times \{0\}= \mathcal{E}_-$, and 
\item[-] $p\big(N(\mathcal{E}_5^U)\big) \cong \mathcal{E}_+ \times J$, where $\mathcal{E}_+ \times \{0\}= \mathcal{E}_+$, and     
\item[-] $p\big(N(\mathcal{E}_3^U)\big) \cap p\big(N(\mathcal{E}_5^U)\big) \cong \partial \mathcal{E}_3 \times J = \partial \mathcal{E}_5 \times J$. 
\end{itemize}
We denote $p\big(N(\mathcal{E}_3^U)\big)$ and $p\big(N(\mathcal{E}_5^U)\big)$ by $N(\mathcal{E}_-)$ and $N(\mathcal{E}_+)$, respectively. There is a closed 3-ball neighbourhood $N(\widehat{E})$ of $\widehat{E}$ in $\mathbb{R}^3$ which is homeomorphic to $\widehat{E} \times J$, where $\widehat{E} \times \{0\}=\widehat{E}$ and contains $N(\mathcal{E}_-) \cup N(\mathcal{E}_+)$.
The disk $E \times \{i\}$ $(i=-1/2,1/2)$  is an embedded disk in $\mathbb{R}^3$ that is parallel to the embedded disk $E=E \times \{0\}$. Since $t(\Delta)=2$,  the interior of $E$ does not contain neither branch points nor triple points. In particular, the interior of $E$ may contains some simple closed double curves. By Lemma \ref{50}, we may assume that the interior of $E$ does not meet the projection $\vert \Delta \vert$ and so the interior of $E \times \{i\}$ $(i=-1/2,1/2)$ does not. Therefore for $\epsilon >0$, the pair $\big(\widehat{E} \times [-1,\epsilon], \mathcal{E}_+ \times [-1,\epsilon] \cup \mathcal{E}_- \times [-1,\epsilon] \cup E \times \{-1/2\}\big)$ is homeomorphic to the model of a descendent disk. Also, the pair $\big(\widehat{E} \times [\epsilon,1], \mathcal{E}_+ \times [\epsilon,1] \cup \mathcal{E}_- \times [\epsilon,1] \cup E \times \{1/2\}\big)$ is homeomorphic to the model of a descendent disk. In particular, the disk $E \times \{i\}$ $(i=-1/2,1/2)$ is a descendenent disk and therefore, we can apply the Roseman move $R$-$7$ along it. A new surface diagram is obtained in which the condition (1) of $2$-cancelling pair holds; that is we have new double decker sets $\mathcal{E}_1^{U'}$ and $\mathcal{E}_2^{U'}$ with the closure of $\mathcal{E}_i^{U'} \cup \mathcal{E}_5^{U'}$ $(i=1,2)$ bounds a disk in $F$ with no double decker set in its interior (see Figure \ref{*}). Now we can go through a similar  procedure that we did to the disk $\widetilde{E}$ explained above to any of $\midpoint{\mathcal{E}_1^{U'}} \cup \midpoint{\mathcal{E}_5^{U'}}$ or $\midpoint{\mathcal{E}_2^{U'}} \cup \midpoint{\mathcal{E}_5^{U'}}$. As a result, we obtain a surface-knot diagram of $F$ with two triple points which form a 2-cancelling pair.  
\end{proof}

\begin{figure}[H]
  \centering
  \captionsetup{font=scriptsize}  
  \subfloat[]{\includegraphics[width=0.46\textwidth]{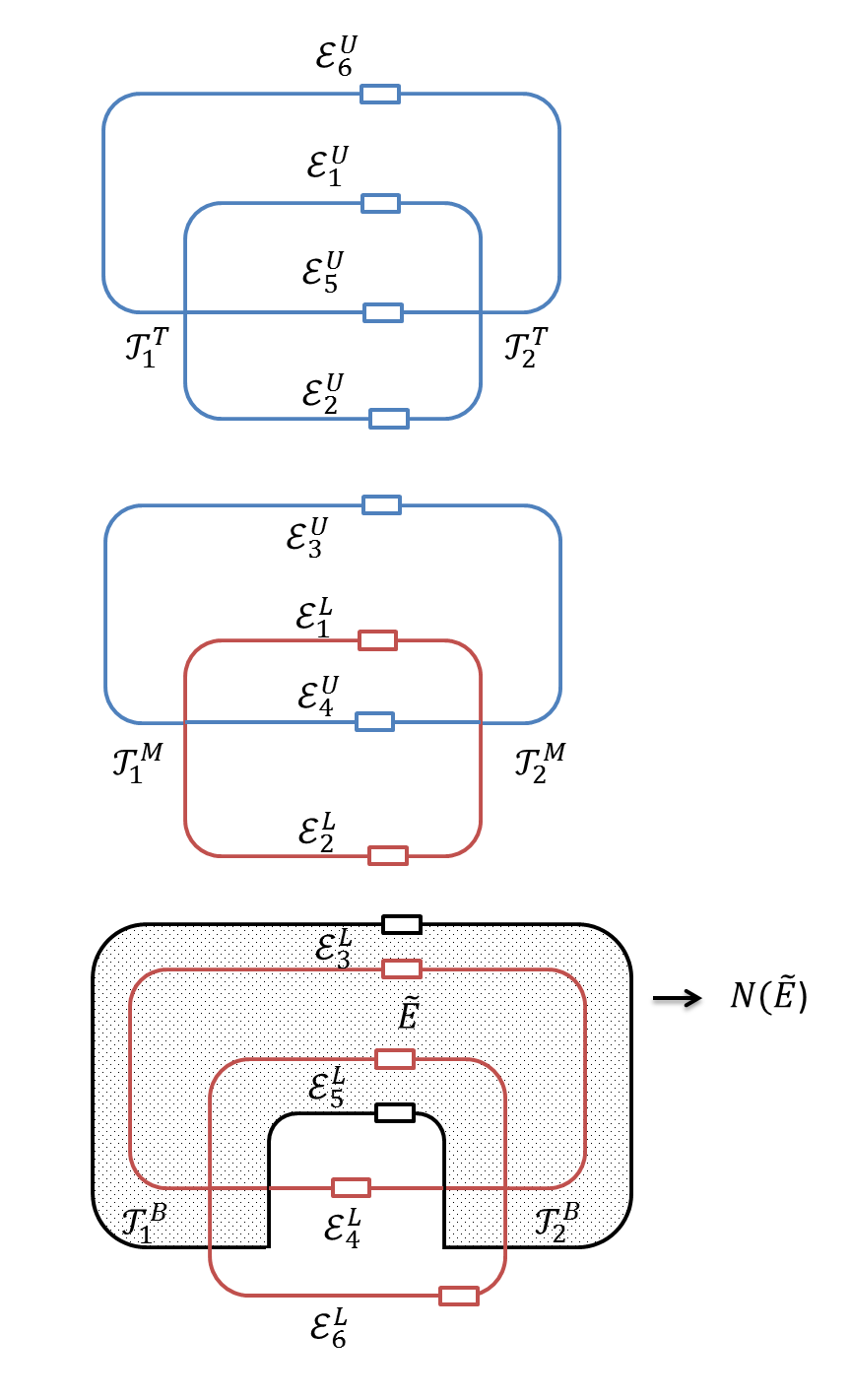}\label{fig:f2}}
   \hfill
   \subfloat[]{\includegraphics[width=0.54\textwidth]{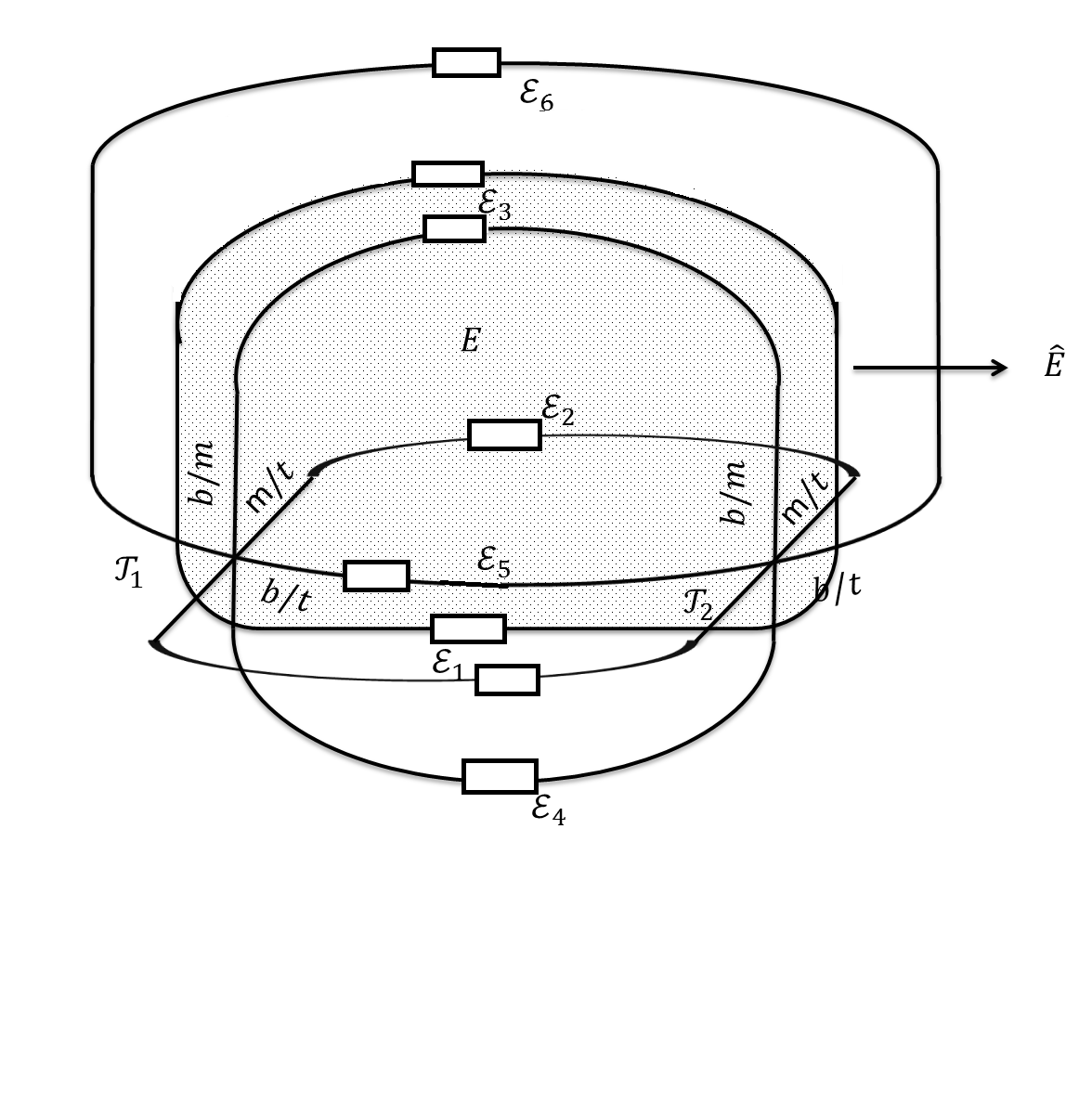}\label{fig:f1}}
  \caption{Lemma \ref{cancelling}  (a) The pre-image of the closure of the double edges  \quad (b) the connections of the double edges in the projection}
  \label{can1}
\end{figure}
\begin{figure}[H]
\centering
\captionsetup{font=scriptsize}      
\mbox{\includegraphics[scale=0.4]{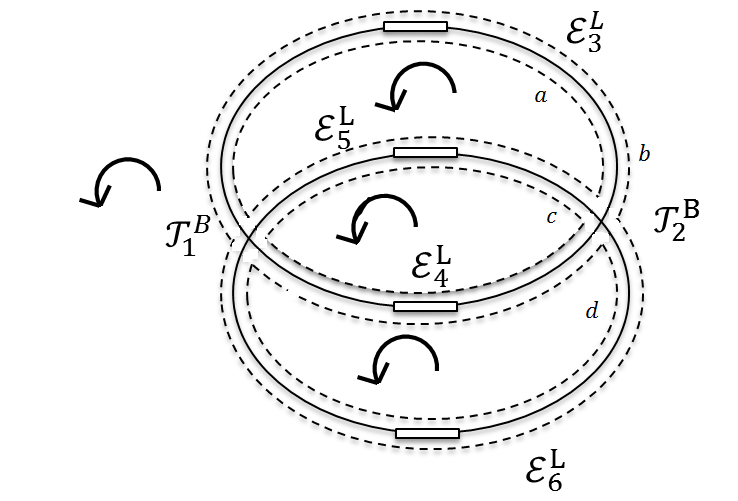}}
 \caption{}
 \label{claim}
\end{figure}
\begin{figure}[H]
\centering
\captionsetup{font=scriptsize}      
\mbox{\includegraphics[scale=0.6]{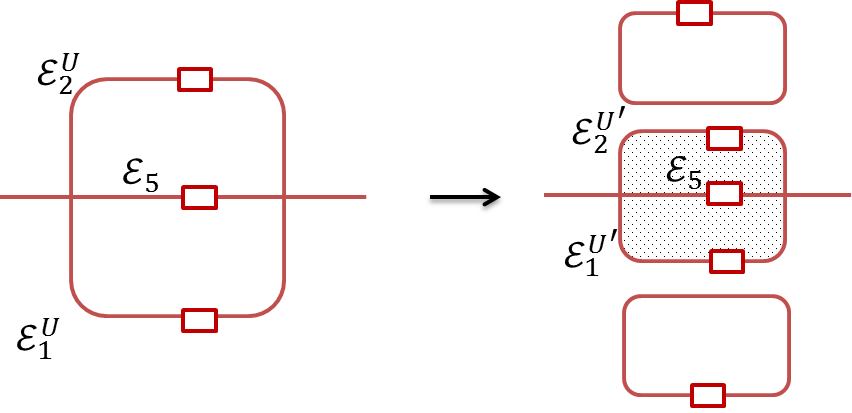}}
 \caption{}
\label{*}
\end{figure}

\begin{lem}\label{Pinch}
Suppose both $\mathcal{E}_1$ and $\mathcal{E}_2$ are  double loops based at $\mathcal{T}_1$ in $\Delta$  such that $\mathcal{E}_i$ $(i=1,2)$ has a $b/t$- and $m/t$-branch at $\mathcal{T}_1$. 
If the $b/m$-branches at both triple points of $\Delta$ are joined, then $F$ satisfies $t(F)=0$. 
\end{lem}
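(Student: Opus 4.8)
The plan is to exploit the hypothesis that the two $b/m$-branches at $\mathcal{T}_1$ and $\mathcal{T}_2$ are joined, together with the fact that $\mathcal{E}_1$ and $\mathcal{E}_2$ are double loops based at $\mathcal{T}_1$ carrying the $b/t$- and $m/t$-branches there, in order to locate a \emph{pinch disk} and apply Lemma~\ref{51}. First I would set up the local picture at $\mathcal{T}_1$: by Lemma~\ref{R6} (applied in the $t$-minimal setting) the other endpoint of each $b/m$- or $m/t$-branch at $\mathcal{T}_1$ must be a triple point, hence must be $\mathcal{T}_2$. So all six branches at $\mathcal{T}_1$ run to $\mathcal{T}_2$. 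Since $\mathcal{E}_1$ and $\mathcal{E}_2$ are loops at $\mathcal{T}_1$ that are the $b/t$- and $m/t$-branches there, and the $b/m$-branches are joined into a single arc (or circle) connecting $\mathcal{T}_1$ and $\mathcal{T}_2$, I would record exactly which decker curves in $F$ these lift to, labelling the three sheets $X,Y,Z$ as in the $2$-cancelling-pair discussion and tracking $\mathcal{E}_i^X,\mathcal{E}_i^Y,\mathcal{E}_i^Z$.

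Next I would identify the disk in $F$ bounded by appropriate lower (or upper) decker arcs. Because $F$ has genus one, $H_1(F)\cong\mathbb{Z}\oplus\mathbb{Z}$, and the same homological argument used in Claim~1 of Lemma~\ref{cancelling} forces certain disjoint simple closed curves built from the decker arcs to bound disks: if all the relevant loops were homologically nontrivial we would get a relation $[b]=[a]+[c]+[d]$ among mutually homologous classes, which is impossible in $\mathbb{Z}\oplus\mathbb{Z}$. This yields a disk $\widetilde{P}$ in $F$ whose boundary consists of decker arcs lying over the joined $b/m$-branches and over one of $\mathcal{E}_1,\mathcal{E}_2$, and whose interior is disjoint from the double decker set after an application of Lemma~\ref{50} to push away any simple closed double curves. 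The image $P=p(\widetilde{P})$, thickened in a normal $J$-direction, then provides a neighbourhood homeomorphic to the model $(J^2\times J,\ \lambda(I)\times J\cup\mathcal{C}\cup P_0)$ of a pinch disk (possibly obstructed by finitely many cylinders coming from interior double circles).

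Then I would invoke Lemma~\ref{51} to clean out the cylinders and conclude that $\Delta$ can be deformed, fixing everything outside $N(P)$, so that an actual pinch disk $P$ appears in the closure of a complementary region. Applying the Roseman move $R$-$5^+$ along $P$ creates a pair of branch points; but creating branch points on the $b/m$-branch turns the configuration at $\mathcal{T}_1$ (and $\mathcal{T}_2$) into one where a $b/m$- or $m/t$-branch terminates at a branch point, so Lemma~\ref{R6} eliminates $\mathcal{T}_1$ — and the same argument, or symmetry, eliminates $\mathcal{T}_2$. Hence $F$ admits a diagram with no triple points, so $t(F)=0$.

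The main obstacle I anticipate is the bookkeeping in the first step: the hypothesis "the $b/m$-branches at both triple points are joined" admits several combinatorial sub-cases depending on how the joining arc sits relative to the loops $\mathcal{E}_1,\mathcal{E}_2$ and on the over/under (top/middle/bottom) data at $\mathcal{T}_1$, and one must check in each case that the decker arcs actually close up into the loop $\lambda(I)$ of the pinch-disk model rather than into some other configuration (e.g. a descendent-disk model, which would instead route through Lemma~\ref{50} and the move $R$-$7$). Verifying that the homological argument applies — i.e. that the curves in question are genuinely disjoint and that the orientations give the claimed additive relation — is the crux; once a pinch disk (or descendent disk) is in hand, Lemmas~\ref{50}, \ref{51} and \ref{R6} finish the argument quickly.
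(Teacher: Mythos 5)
Your overall route (find a disk in $F$ bounded by decker curves, clean it up with Lemma~\ref{51}, apply $R$-$5^+$ along the resulting pinch disk, then kill a triple point via Lemma~\ref{R6}) is the same skeleton as the paper's proof, but the step you yourself flag as the crux is exactly where your sketch fails. First, your opening combinatorial claim is wrong: the hypothesis says the $b/t$- and $m/t$-branches at $\mathcal{T}_1$ are the loops $\mathcal{E}_1,\mathcal{E}_2$, so they return to $\mathcal{T}_1$; only the $b/m$-branches run to $\mathcal{T}_2$, and then (by Lemma~\ref{even} and the count $t(\Delta)=2$) the $b/t$- and $m/t$-branches at $\mathcal{T}_2$ form loops at $\mathcal{T}_2$. ``All six branches at $\mathcal{T}_1$ run to $\mathcal{T}_2$'' contradicts the hypothesis you are assuming. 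Second, the disk you need is bounded by the single upper decker loop $\midpoint{\mathcal{E}_i^U}$ (a simple closed curve based at $\mathcal{T}_1^T$), not by ``decker arcs lying over the joined $b/m$-branches and over one of $\mathcal{E}_1,\mathcal{E}_2$''; a boundary of that mixed kind is not the pinch-disk model $\lambda(I)\times J\cup P_0$ at all (it is the descendent-disk situation), so your identification of the neighbourhood of $P$ with the model of Lemma~\ref{51} would not go through as stated.

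Third, and most importantly, the homological input is different from what you propose. The Claim~1 argument of Lemma~\ref{cancelling} (four pairwise disjoint homologous loops forcing $[b]=[a]+[c]+[d]$) is not the mechanism here, and you never exhibit the four curves it would require. What the paper uses is: the lower decker curves $C_1^L$ (of the circle $\midpoint{\mathcal{E}}_1\cup\midpoint{\mathcal{E}}_2$) and $C_2^L$ (of the joined $b/m$-circle) intersect transversally in the single point $\mathcal{T}_1^B$, hence they represent distinct non-trivial classes in $H_1(F)\cong\mathbb{Z}\oplus\mathbb{Z}$; since $\midpoint{\mathcal{E}_i^U}$ is disjoint from both, it cannot represent a non-trivial class (on the genus-one surface a non-trivial simple closed curve disjoint from both members of such a pair is impossible), so it bounds a disk $\widetilde{P}\subset F$. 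Without this argument your proof has no disk to feed into Lemma~\ref{51}, so the gap is genuine. Finally, a small economy: after $R$-$5^+$ you only need to slide one new branch point along the $m/t$-branch at $\mathcal{T}_1$ to eliminate $\mathcal{T}_1$; you do not need to eliminate $\mathcal{T}_2$ as well, since a diagram with fewer triple points already contradicts $t$-minimality and $t(F)=1$ is excluded, giving $t(F)=0$.
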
 
\begin{proof}
Let $C_1=\midpoint{\mathcal{E}}_1 \cup \midpoint{\mathcal{E}}_2$ be a double point circle in $\Delta$ such that $\mathcal{E}_i$ $(i=1,2)$ is a $b/t$- and $m/t$-branch at $\mathcal{T}_1$.  Let $C_2=\midpoint{\mathcal{E}}_3 \cup \midpoint{\mathcal{E}}_4$ be a double point circle in $\Delta$ such that each of $\mathcal{E}_i$ $(i=3,4)$ is a $b/m$-branch at $\mathcal{T}_1$ and $\mathcal{T}_2 $.  Since $t(\Delta)=2$, we have the double point circle $C_3=\midpoint{\mathcal{E}}_5 \cup \midpoint{\mathcal{E}}_6$ in $\Delta$ such that $\mathcal{E}_i$ $(i=5,6)$ is a $b/t$- and $m/t$-branch at $\mathcal{T}_2$. The double decker set and its projected image in 3-space are depicted in Figure \ref{L2}. Let $C_1^L$ and $C_2^L$ be the lower decker curves of $C_1$ and $C_2$ in $F$, respectively. $C_1^L$ and $C_2^L$ intersect at only one crossing point; that is $\mathcal{T}_1^B$. This implies that $[C_1^L]$ and $[C_2^L]$ are distinct non-trivial elements in $H_1(F)$. Suppose for the sake of contradiction that there exists $i \in \{1,2\}$ such that $\midpoint{\mathcal{E}_i^U}$ does not bound a disk in $F$. Then $[\midpoint{\mathcal{E}_i^U}]$ represents a non-trivial element in homology $H_1(F)$ which is distinct from $[C_1^L]$ or from $[C_2^L]$. Therefore, $\midpoint{\mathcal{E}_i^U}$ must intersect $C_1^L$ or $C_2^L$, a contradiction. We obtain that any of $\midpoint{\mathcal{E}_i^U}$ $(i=1,2)$ bounds a disk in $F$. Suppose $\widetilde{P}$ is the disk bounded  by $\midpoint{\mathcal{E}_1^U}$ and let $P=p(\widetilde{P})$. For $\epsilon >0$, assume that $P\times \{\epsilon\}$ is an embedded disk in $\mathbb{R}^3$ that is parallel to $P$ and transversal to $\Delta$ along $\partial (P \times \{\epsilon\})$. Let $\mathcal{M}_2$, $\mathcal{M}_3$ and $\mathcal{S} \subset \vert \Delta \vert$ denote the set of double points, triple points and branch points, respectively. Because $t(\Delta)=2$, $\big($int$(P)\big) \cap (\mathcal{M}_3 \cup \mathcal{S}) = \emptyset$. In particular, the interior of $P$ might contains some simple closed double curves. By Lemma \ref{51}, we can suppose  that the interior of $P \times \{\epsilon\}$ does not meet $\vert \Delta \vert$. In fact, $P \times \{\epsilon\}$ has a closed neighbourhood $N(P \times \{\epsilon\})$ in $\mathbb{R}^3$ such that the pair $( N(P \times \{\epsilon\}), N(P \times \{\epsilon\}) \cap \vert \Delta \vert)$ is homeomorphic to the model of a pinch disk. Now, we apply the move $R$-$5^+$ to create a pair of branch points and then moving one of the branch points along the $m/t$-branch at $\mathcal{T}_1$ so that $\mathcal{T}_1$ is eliminated. This completes the proof.
\end{proof}
\begin{figure}[H]
  \centering
  \captionsetup{font=scriptsize}  
   \subfloat[]{\includegraphics[width=0.5\textwidth]{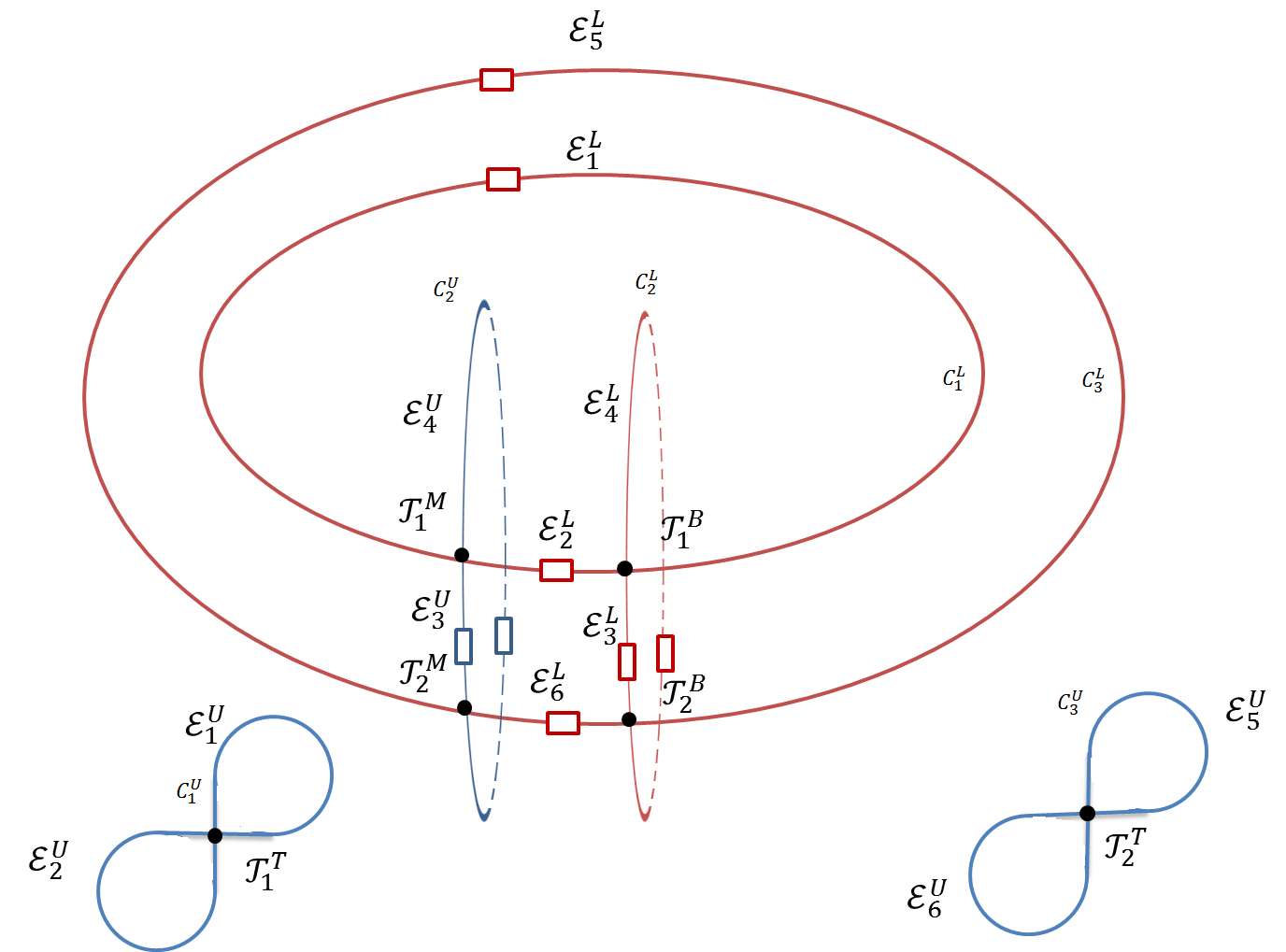}\label{fig:f2}}
   \hfill
  \subfloat[]{\includegraphics[width=0.5\textwidth]{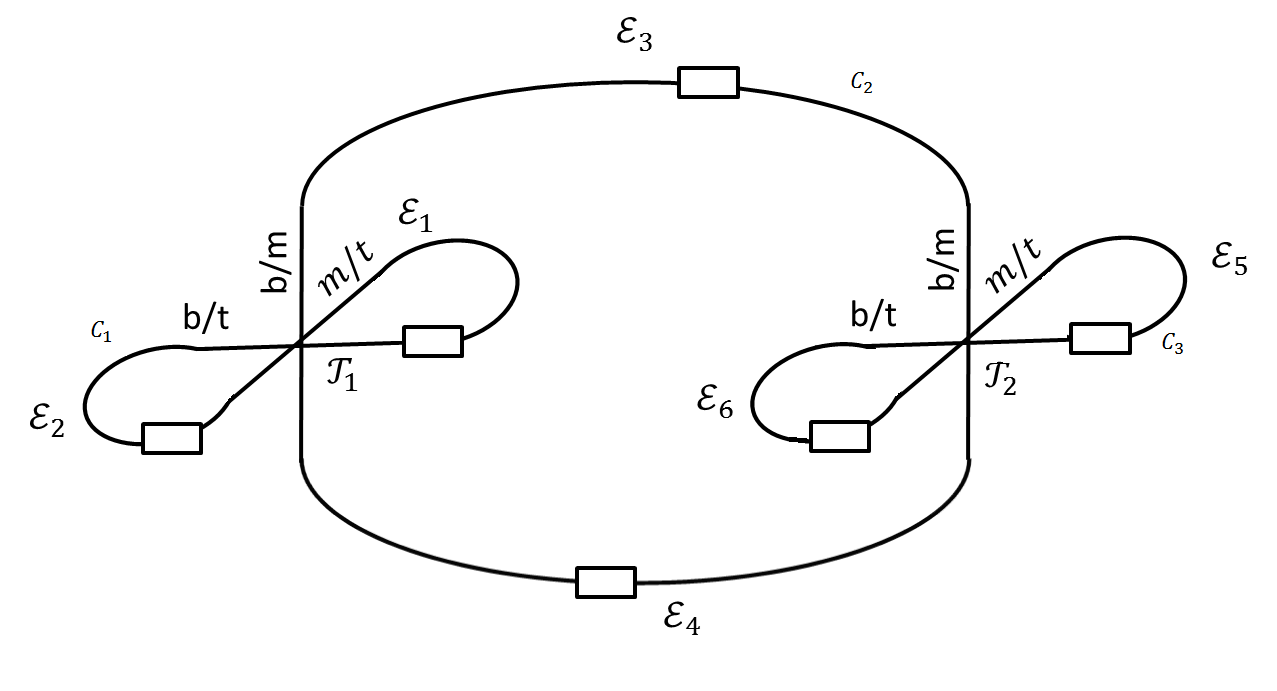}\label{fig:f1}}
 \caption{Lemma \ref{Pinch}  (a) The pre-image of the closure of the double edges  \quad (b) the connections of the double edges in the projection}
  \label{L2}
\end{figure}

\begin{lem}\label{imp}
Suppose that there exists a double point circle $C=\midpoint{\mathcal{E}}_1 \cup \midpoint{\mathcal{E}}_2$ such that $\mathcal{E}_i$ $(i=1,2)$ is a $b/m$-branch at both triple points of $\Delta$.
Then, the surface-knot $F$ satisfies $t(F)=0$.
\end{lem}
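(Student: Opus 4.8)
The plan is to follow the template of Lemmas~\ref{cancelling} and~\ref{Pinch}: determine how the double edges other than $\mathcal{E}_1,\mathcal{E}_2$ are attached to $\mathcal{T}_1$ and $\mathcal{T}_2$, use the genus-one hypothesis together with the intersection number formula on $H_1(F)\cong\mathbb{Z}\oplus\mathbb{Z}$ recalled in Section~5 to produce a simple closed decker curve that bounds a disk in $F$, make that disk disjoint from $\vert\Delta\vert$ by Lemma~\ref{50} or Lemma~\ref{51}, and then apply a Roseman move that deletes a triple point, contradicting $t$-minimality so that $t(F)=0$. First I would fix notation: $\mathcal{E}_1,\mathcal{E}_2$ are the $b/m$-branches forming $C$, and since each is a $b/m$-branch at $\mathcal{T}_1$ and at $\mathcal{T}_2$, the upper decker curve $C^U$ lies on the middle sheets and the lower decker curve $C^L$ on the bottom sheets of $F$; write $\mathcal{E}_3,\mathcal{E}_4$ and $\mathcal{E}_5,\mathcal{E}_6$ for the $b/t$- and $m/t$-branches at $\mathcal{T}_1$, and similarly at $\mathcal{T}_2$. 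By Lemma~\ref{R6} every $m/t$-branch at either triple point has a triple point as its other endpoint, hence (since $\Delta$ has only the two triple points) is a double edge with both endpoints in $\{\mathcal{T}_1,\mathcal{T}_2\}$; by Lemma~\ref{even} none of them is a one-edge double point circle, and a short parity count of how the branches at $\mathcal{T}_1$ and $\mathcal{T}_2$ pair into double edges discards several attaching patterns outright.

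I would then split into cases according to the attaching pattern of the $b/t$- and $m/t$-branches. If both $m/t$-branches join $\mathcal{T}_1$ to $\mathcal{T}_2$ as $m/t$-branches at both ends, and likewise the two $b/t$-branches, then all six double edges join $\mathcal{T}_1$ and $\mathcal{T}_2$ with the same type at both ends, so Lemma~\ref{cancelling} finishes this case. If instead an $m/t$- and a $b/t$-branch at $\mathcal{T}_1$ are joined into a loop based at $\mathcal{T}_1$ --- which, by the count, forces a second such loop at $\mathcal{T}_1$ --- then, since $C$ joins the $b/m$-branches, the hypotheses of Lemma~\ref{Pinch} are met and that lemma finishes this case. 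What remains are the ``mixed'' patterns, in which an $m/t$-branch at one triple point is joined to a $b/t$-branch at the other.

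For a mixed pattern I would argue homologically in $F$. One checks that $C^L$ (or, symmetrically, $C^U$) meets a neighbouring decker curve $D$ --- a $b/t$-decker curve through $\mathcal{T}_1^B$, respectively an $m/t$-decker curve through $\mathcal{T}_1^M$ --- transversally in exactly one point, so that $I\big([C^L],[D]\big)=\pm1$ and $[C^L],[D]$ form a basis of $H_1(F;\mathbb{Q})$. Consequently every embedded simple closed decker curve disjoint from $C^L\cup D$ is null-homologous in $F$, and a null-homologous simple closed curve on the torus bounds a disk. Working through the mixed patterns one exhibits such a curve --- $\midpoint{\mathcal{E}_j^{U}}$ for one of the remaining branches, or a loop built from two of them --- that bounds a disk $\widetilde P\subset F$. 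Put $P=p(\widetilde P)$. Since $t(\Delta)=2$, the interior of $P$ contains no triple points and, as in the proofs of Lemmas~\ref{cancelling} and~\ref{Pinch}, no branch points, only simple closed double curves; so by Lemma~\ref{51} (resp.\ Lemma~\ref{50}) a disk parallel to $P$ can be made disjoint from $\vert\Delta\vert$ in its interior, becoming a pinch disk (resp.\ a descendent disk). Applying $R$-$5^+$ and then sliding a branch point along an adjacent $m/t$-branch via Lemma~\ref{R6} (resp.\ applying $R$-$7$ and then using Lemma~\ref{R6} or the $2$-cancelling criterion) removes a triple point. In every case $\Delta$ is carried by Roseman moves to a diagram of $F$ with fewer than two triple points, contradicting $t$-minimality, so $t(F)=0$.

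The hard part will be the bookkeeping of the mixed case: deciding precisely which mixed attaching patterns survive once the over/under information at $\mathcal{T}_1,\mathcal{T}_2$ and the relation $\epsilon(\mathcal{T}_1)=-\epsilon(\mathcal{T}_2)$ from Lemma~\ref{equa} are imposed, and then, for each surviving pattern, pinning down the decker curve that must be null-homologous. This is exactly where the genus-one hypothesis is used: on a surface of higher genus two decker curves crossing at a single triple-point lift need not span $H_1$ rationally, so the disk that drives the argument need not exist.
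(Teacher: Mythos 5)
Your overall strategy is the paper's (reduce to Lemmas \ref{cancelling} and \ref{Pinch}, use genus one and intersection numbers to force a decker curve to bound a disk, clear its interior by Lemma \ref{50}/\ref{51}, then kill triple points by Roseman moves), but your case enumeration has two concrete gaps. First, you never treat the possibility that $b/t$-branches end at branch points, i.e.\ that $\mathcal{T}_1,\mathcal{T}_2$ are of type $\langle 2\rangle$, $\langle 5\rangle$ or $\langle 25\rangle$. Lemma \ref{R6} only forces the $m/t$- (and $b/m$-) branches to end at triple points; it says nothing about $b/t$-branches, so in these types neither your first case (all six edges running between $\mathcal{T}_1$ and $\mathcal{T}_2$ with the same type at both ends) nor your ``mixed'' case applies. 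The paper disposes of this using exactly the hypothesis of the lemma: since $\mathcal{E}_1$ is a $b/m$-branch at both triple points, the two branch points, which satisfy $\lambda(\mathcal{B}_1)=\lambda(\mathcal{B}_2)$ and $\epsilon(\mathcal{B}_1)=-\epsilon(\mathcal{B}_2)$ by Lemma \ref{equa}, can be joined by an embedded arc missing the double decker set, and an $R$-$5^-$ move cancels them; only after this reduction may one assume the diagram has no branch points and invoke Lemma \ref{cancelling}. Without that step your first case does not close.

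Second, your ``short parity count'' claim that a $b/t$--$m/t$ loop at $\mathcal{T}_1$ forces a second such loop at $\mathcal{T}_1$ is false: the remaining $m/t$- and $b/t$-ends at $\mathcal{T}_1$ may instead run to $\mathcal{T}_2$, producing one loop at $\mathcal{T}_1$, one loop at $\mathcal{T}_2$, one edge that is $m/t$ at both triple points and one that is $b/t$ at both (the paper's Case 3). This configuration is not your Lemma \ref{Pinch} case, and it is not a ``mixed'' pattern as you define it (no edge is $m/t$ at one triple point and $b/t$ at the other; indeed that literal pattern is what the paper rules out by the Alexander numbering of the eight regions, so your designated hard case and the actual hard case do not coincide). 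Moreover, in this configuration a single pinch- or descendent-disk move need not eliminate a triple point: when neither loop's upper decker circle bounds a disk, the paper must first show, by the Claim~1 homology argument, that $\midpoint{\mathcal{E}_4^U}\cup\midpoint{\mathcal{E}_6^U}$ bounds a disk, then apply an $R$-$7$ move along a parallel descendent disk meeting $\mathcal{E}_1$ (rerouting the lower decker arc), and then a second descendent-disk $R$-$7$ move meeting $\mathcal{E}_3$ and $\mathcal{E}_5$, before the $2$-cancelling hypothesis of Lemma \ref{cancelling} is finally satisfied. So the homological machinery you propose is the right one, but the enumeration should be organized, as in the paper, by where the two $m/t$-branches at $\mathcal{T}_1$ terminate (both at $\mathcal{T}_2$, both loops at $\mathcal{T}_1$, or one of each), with the branch-point cancellation step added.
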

\begin{proof}
Let $\mathcal{E}_3$ and $\mathcal{E}_4$ be the $m/t$-branches at $\mathcal{T}_1$.  If the other boundary point of any of $\mathcal{E}_i$ $(i=3,4)$ is a branch point, then the result follows from Lemma \ref{R6}. Suppose on the other hand  that the other boundary point of any of $\mathcal{E}_i$ $(i=3,4)$ is a triple point.  Since $\mathcal{M}_3=\{\mathcal{T}_1,\mathcal{T}_2\}$,  we have to consider the following cases
\begin{itemize}
\item[Case 1.] The other boundary point of $\mathcal{E}_i$ $(i=3,4)$ is $\mathcal{T}_2$. From the Alexander numbering assigned to the eight regions around the triple points $\mathcal{T}_1$ and $\mathcal{T}_2$, it follows that each of $\mathcal{E}_3$ and $\mathcal{E}_4$ is a $m/t$-branch at $\mathcal{T}_2$. By Satoh's identity (\ref{x}), $\mathcal{T}_1$ and $\mathcal{T}_2$ are of the same type. Assume that both triple points are of type $<2>$. Let $\mathcal{E}_5$ be the $b/t$-branch at $\mathcal{T}_1$ such that the orientation normal to the middle sheet points towards $\mathcal{E}_5$. The other endpoint of $\mathcal{E}_5$ is a branch point. Also let  $\mathcal{E}_6$ be the $b/t$-branch at $\mathcal{T}_2$ such that the orientation normal to the middle sheet points towards $\mathcal{E}_6$. The other endpoint of $\mathcal{E}_6$ is a branch point. Since $\mathcal{T}_1$ and $\mathcal{T}_2$ have same Alexander numbering and opposite signs,  we have $\lambda(\mathcal{B}_1)=\lambda(\mathcal{B}_2)$ and $\epsilon(\mathcal{B}_1)=-\epsilon(\mathcal{B}_2)$.  Because $\mathcal{E}_1$ is a $b/m$-branch at both triple points of $\Delta$, there is an embedded arc $\gamma$ in $\Delta$ which misses the double decker set except the boundary and has a neighbourhood as shown in Figure \ref{branch}. Hence we can apply the Roseman move $R$-$5^-$ to $\Delta$ so that we obtain a new surface-knot diagram of $F$ which has no branch points. We can apply the same operation if the triple points of $\Delta$ are of type $<5>$ or $<25>$. Hence we may assume that $\Delta$ has no branch points. Now  $t(F)=0$ follows from Lemma \ref{cancelling}.\\

\item[Case 2.]   The other boundary point of $\mathcal{E}_i$ $(i=3,4)$ is $\mathcal{T}_1$. In this case $\mathcal{E}_i$ $(i=3,4)$ is a double loop based at $\mathcal{T}_1$ with the property that it is a $b/t$- and $m/t$- branch at $\mathcal{T}_1$ (For if $\mathcal{E}_3$ and $\mathcal{E}_4$ coincide, we obtain a double point circle with single triple point, contradicts Lemma \ref{even}) . Now Lemma \ref{Pinch} implies $t(F)=0$.\\
\item[Case 3.]  The other boundary point of $\mathcal{E}_3$ is $\mathcal{T}_1$ and  the other boundary point of $\mathcal{E}_4$ is $\mathcal{T}_2$. In this case, $\mathcal{E}_3$ is a double loop based at $\mathcal{T}_1$ such that it is a $b/t$- and $m/t$-branch at $\mathcal{T}_1$ and $\mathcal{E}_4$ is a $m/t$-branch at both $\mathcal{T}_1$ and $\mathcal{T}_2$. Let $C_1$ be a double point circle in $\Delta$ containing $\mathcal{E}_i$ $(i=3,4)$. We may assume as in the first case that both triple points $\mathcal{T}_1$ and $\mathcal{T}_2$ are of type $<0>$. From  Lemma \ref{even}, we obtain that $C_1$ contains two other double edges $\mathcal{E}_5$ and $\mathcal{E}_6$ with the following properties:  The double edge $\mathcal{E}_5$ is a double loop based at $\mathcal{T}_2$ such that it is a $b/t$- and $m/t$-branch at $\mathcal{T}_2$ and the double edge $\mathcal{E}_6$ is a $b/t$-branch at $\mathcal{T}_1$ and $\mathcal{T}_2$.  The double decker set is depicted in  Figure \ref{33} (a) and its projected image in 3-space is shown in  Figure \ref{33} (b). Consider the upper decker curve $C_1^U$ of $C_1$. If $\midpoint{\mathcal{E}_3^U}$ or $\midpoint{\mathcal{E}_5^U}$ is on the boundary of a disk in $F$, then $t(F)=0$ follows from the proof of Lemma \ref{Pinch}. Suppose on the other hand that neither  $\midpoint{\mathcal{E}_3^U}$ nor $\midpoint{\mathcal{E}_5^U}$  bounds a disk in $F$. We follow a similar argument of Claim 1 to show that the region bounded by  $\midpoint{\mathcal{E}_4^U} \cup \midpoint{\mathcal{E}_6^U}$ must be homeomorphic to a disk, denoted by $\widetilde{M}$, in $F$.  Let $M=p(\widetilde{M})$. We can go through the similar operations that we did in Lemma \ref{cancelling} to show that there exists a descendent disk in $\mathbb{R}^3$ that is parallel to $M$ and involves two double points on $\mathcal{E}_1$. We apply the move $R$-$7$ along this descendent disk and as a result, we obtain new double decker set $\mathcal{E}_1^{L'}$ with the closure of $\mathcal{E}_1^{L'} \cup \mathcal{E}_6^L$ bounds a disk in $F$, denoted by $\widetilde{M_1}$. Then by a similar way, we can verify the existence of a descendent disk in $\mathbb{R}^3$ that is parallel to $p(\widetilde{M_1})$ and involves a point on $\mathcal{E}_3$ and a point on $\mathcal{E}_5$. By applying the move $R$-$7$ along this descendent disk, the hypothesis of Lemma \ref{cancelling} is satisfied and thus we get the conclusion.
\end{itemize}
\end{proof}

\begin{figure}[H]
\centering
\captionsetup{font=scriptsize}      
\mbox{\includegraphics[scale=0.4]{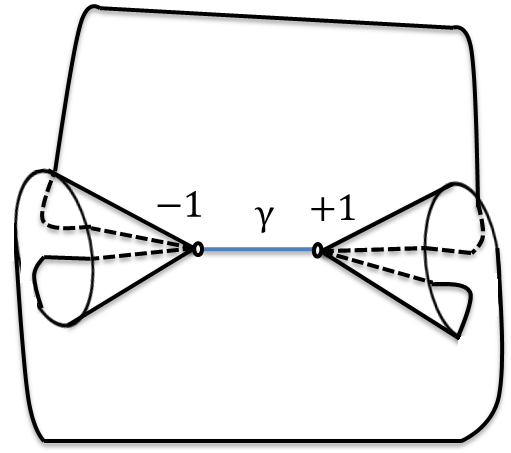}}
 \caption{}
\label{branch}
\end{figure} 
\begin{figure}[H]
  \centering
  
 \captionsetup{font=scriptsize} 
  \subfloat[]{\includegraphics[width=0.55\textwidth]{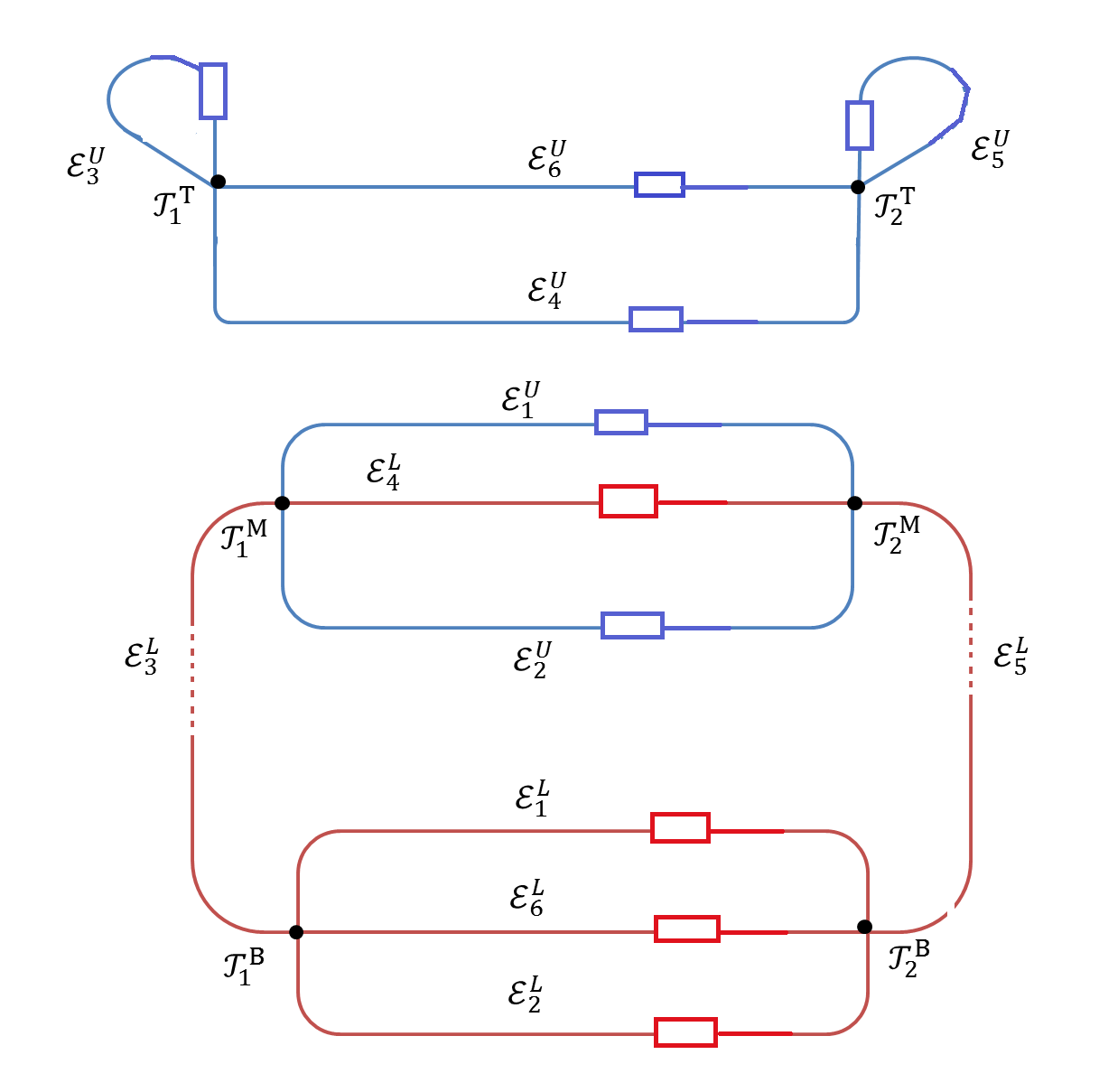}\label{fig:f2}}
   \hfill
  \subfloat[]{\includegraphics[width=0.45\textwidth]{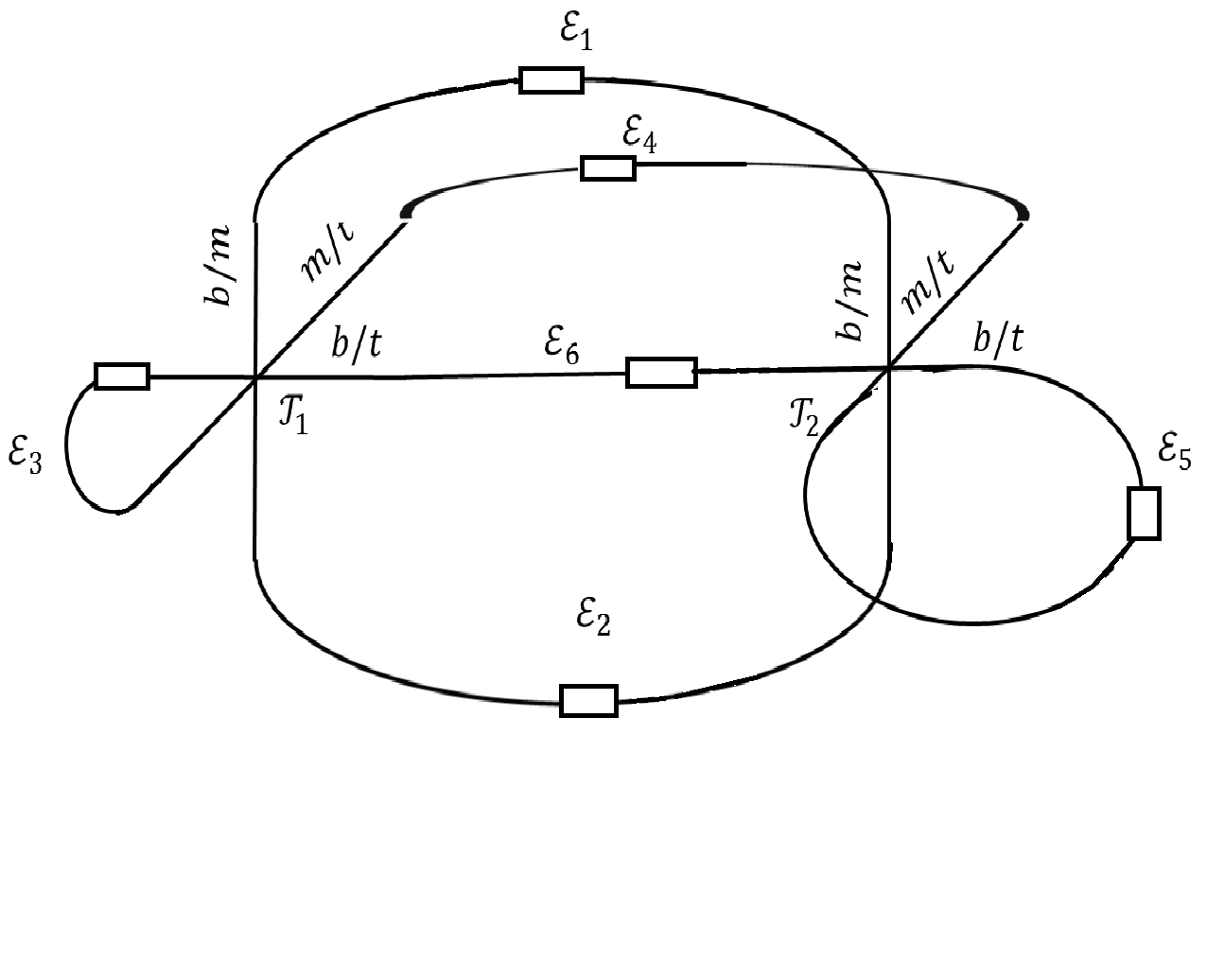}\label{fig:f1}}
  \caption{Case 3 of Lemma \ref{imp}   (a) The pre-image of the closure of the double edges  \quad (b) the connections of the double edges in the projection}
  \label{33}
\end{figure}

\section{Proof of Theorem \ref{xy}}
\begin{proof}
Assume that there is a genus-one surface-knot $F$ satisfying $t(F)=2$ . Let $\Delta$ be a $t$-minimal surface-knot diagram of $F$ with the triple points $\mathcal{T}_1$ and $\mathcal{T}_2$. By Satoh's identity (\ref{x}), we assume that $\epsilon(\mathcal{T}_1) = -\epsilon(\mathcal{T}_2)$ and that  $\lambda(\mathcal{T}_1)=\lambda(\mathcal{T}_2)$.  Note that the other endpoint of a $b/t$-branch at $\mathcal{T}_1$ or $\mathcal{T}_2$  might be a branch point.  In particular, $\mathcal{T}_1$ and $\mathcal{T}_2$ are of the same type by Satoh's identity (\ref{x}). Let $\mathcal{E}_i$ $(i=1,2)$  be  a double edge in $\Delta$ such that $\mathcal{E}_i$ $(i=1,2)$ is a $b/m$-branch at both $\mathcal{T}_1$ and $\mathcal{T}_2$.  From Lemma \ref{imp}, we may assume that there is no double point circle $C$ in $\Delta$ such that $C=\midpoint{\mathcal{E}_1} \cup \midpoint{\mathcal{E}_2}$.
There are the six cases by the following (i) the orientation of the double branches incident to the triple points; (ii)  the Alexander numbering assigned to the set of complementary connected regions $\mathbb{R}^3\setminus \vert \Delta \vert$; (iii) Lemma \ref{even}; and (iv) Remark \ref{conn}.
We show that for some cases, $\Delta$ is not a $t$-minimal and in some other cases that there is no such a diagram. So in both cases, we get a contradiction. Since there is no triple points other than $\mathcal{T}_1$ and $\mathcal{T}_2$, these six cases are sufficient for the proof. There are figures illustrating the connection of the double edges at the end of each case.
\begin{itemize}
\item[Case 1.] There are two double point circles $C_1$ and $C_2$:
\[   \left\{
\begin{array}{ll}
      C_1: \midpoint{\mathcal{E}_1} \cup \midpoint{\mathcal{E}_2} \cup \midpoint{\mathcal{E}_3} \cup \midpoint{\mathcal{E}_4}; & \\
    C_2: \midpoint{\mathcal{E}_5} \cup \midpoint{\mathcal{E}_6}.&  \\
   
\end{array} \
\right. \]  

where $\mathcal{E}_4$ (resp. $\mathcal{E}_2$) is a $b/m$- (resp. $m/t$)-branch at both $\mathcal{T}_1$ and $\mathcal{T}_2$. The double edge $\mathcal{E}_3$ is a $b/m$-branch at $\mathcal{T}_1$ and a $m/t$-branch at $\mathcal{T}_2$ while $\mathcal{E}_1$ is a $m/t$-branch at $\mathcal{T}_1$ and a $b/m$-branch at $\mathcal{T}_2$. The double edge $\mathcal{E}_i$ $(i=5,6)$ is a $b/t$-branch at both triple points of $\Delta$. \\   
The upper decker curve $C_1^U$ is a closed path in $F$. Suppose $C_1^U$ bounds by a disk in $F$. Then the closure of $\mathcal{E}_2^U \cup \mathcal{E}_5^U$  bounds a disk in $F$. We can show by a similar way of the proof of Lemma \ref{cancelling} that there is a descendent disk in the closure of $\mathbb{R}^3 \setminus  \vert \Delta \vert $ with its boundary contains a double point on $\mathcal{E}_1$ and a double point on $\mathcal{E}_3$. By applying the $R$-$7$ move along this descendent disk, the assumption of Lemma \ref{cancelling} is satisfied and therefore $t(F)=0$. This contradicts the assumption that $\Delta$ is a $t$-minimal. Similar proof is considered if $C_1^L$ bounds a disk in $F$. On the other hand, suppose that neither $C_1^U$ nor $C_1^L$ is homotopic to a trivial disk in $F$. Then since the oriented intersection number $I(C_1^U,C_1^L)=0$, we have $[C_1^U]=[C_1^L]$ in $H_1(F)$. We can show by a similar proof of Claim 1 that  the region bounded by $\midpoint{\mathcal{E}_4^U} \cup \midpoint{\mathcal{E}_2^L}$ is a disk in $F$, denoted by $\widetilde{R}$. In particular,  there exists a descendent disk $S$ in one of the complementary open regions of the projection that is parallel to $p(\widetilde{R})$ such that the boundary of $S$  contains two double points on $\mathcal{E}_5$.  Apply the move $R$-$7$ along $S$. As a consequence, we obtain a new double decker set $\mathcal{E}_5^{U'}$ such that the closure of $\mathcal{E}_2^U \cup \mathcal{E}_5^{U'}$  is a disk in $F$, denoted by $\widetilde{M}$. Now by following the similar argument of Lemma \ref{cancelling}, we can find a descendent disk in $\mathbb{R}^3$ that is parallel to $p(\widetilde{M})$. By applying the move $R$-$7$ to the resulting desecendent disk, the diagram $\Delta$ is transformed to a diagram satisfying the assumption of Lemma \ref{cancelling} and thus we get a contradiction.

\item[Case 2.] There are a double point circle $C_1$ and a double point interval $C_2$:
\[   \left\{
\begin{array}{ll}
      C_1: \midpoint{\mathcal{E}_1} \cup \midpoint{\mathcal{E}_2} \cup \midpoint{\mathcal{E}_3} \cup \midpoint{\mathcal{E}_4}; & \\
    C_2: \midpoint{\mathcal{E}_5} \cup \midpoint{\mathcal{E}_6} \cup \midpoint{\mathcal{E}_7}.&  \\
   \end{array} \
\right. \]  
such that $\mathcal{E}_i$ $(i=1,2,3,4)$ is as described in the previous case. The boundary points of the double edge $\mathcal{E}_5$  (resp. $\mathcal{E}_7$) are the triple point $\mathcal{T}_2$ (resp. $\mathcal{T}_1$ ) and a branch point. The double edge $\mathcal{E}_6$ is a $b/t$-branch at both $\mathcal{T}_1$ and $\mathcal{T}_2$. \\ Because $\mathcal{E}_1$ joins the $b/m$-branches at both triple points, we can connect the two branch points by a simple arc that misses the singularity set of the projection except the boundary. Therefore, this case is reduced to the previous case. 

\item[Case 3.] There are a double point circle $C_1$ and two double point interval $C_2$ and $C_3$:
\[   \left\{
\begin{array}{ll}
      C_1: \midpoint{\mathcal{E}_1} \cup \midpoint{\mathcal{E}_2} \cup \midpoint{\mathcal{E}_3} \cup \midpoint{\mathcal{E}_4} ;& \\
    C_2: \midpoint{\mathcal{E}_5} \cup \midpoint{\mathcal{E}_6} ;&  \\
    C_3:\midpoint{\mathcal{E}_7} \cup \midpoint{\mathcal{E}_8}
   
\end{array} \
\right. \]  
such that $\mathcal{E}_i$ $(i=1,2,3,4)$ is as described in Case 1. The boundary points of the double edge $\mathcal{E}_5$ and $\mathcal{E}_6$  (resp. $\mathcal{E}_7$ and $\mathcal{E}_8$) are the triple point $\mathcal{T}_2$ (resp. $\mathcal{T}_1$ ) and a branch point.\\ The proof of this case is analogous to the proof of the previous case.

\item[Case 4.] There are two double point circles $C_1$ and $C_2$:
  \[   \left\{
\begin{array}{ll}
      C_1: \midpoint{\mathcal{E}_1} \cup \midpoint{\mathcal{E}_2};& \\
    C_2: \midpoint{\mathcal{E}_3} \cup \midpoint{\mathcal{E}_4}.&  \\
   
\end{array} \
\right. \]  
where each of $\mathcal{E}_i$ $(i=1,2)$ is a $b/m$-branch at $\mathcal{T}_{1}$ and a $m/t$-branch at $\mathcal{T}_{2}$ and  $\mathcal{E}_i$ $(i=3,4)$ is a $m/t$-branch at $\mathcal{T}_{1}$ and a $b/m$-branch at $\mathcal{T}_{2}$. Assume $C_i^U$ is the upper decker curve of $C_i$ $(i=1,2)$ and $C_i^L$ is the lower decker curve of $C_i$ $(i=1,2)$ in $F$. In particular, $C_2^U$ intersects $C_1^L$ at one crossing point which is $\mathcal{T}_{2}^M$ . This implies that each of $[C_2^U]$ and $[C_1^L]$ is homotopic to a non-trivial element in $H_1(F)$ and indeed they represent distinct elements in $H_1(F)$. Similarly, $C_1^U \cap C_2^L =\{ \mathcal{T}_1^M\}$ in $F$. Therefore, $[C_1^U]$ represents a non-trivial element in homology. Now any of $C_2^U$ and $C_1^L$ does not meet $C_1^U$, which in turn gives that $[C_2^U]$, $[C_1^L]$ and $[C_1^U]$  are homologous in $H_1(F)$. But $[C_1^L]$ is not homologous to $[C_2^U]$. This is a contradiction. 

\item[Case 5.] There are a double point circle $C_1$ and double point circle $C_2$ with two loops:
\[   \left\{
\begin{array}{ll}
      C_1: \midpoint{\mathcal{E}_1} \cup \midpoint{\mathcal{E}_2};& \\
    C_2: \midpoint{\mathcal{E}_3} \cup \midpoint{\mathcal{E}_4} \cup \midpoint{\mathcal{E}_5} \cup \midpoint{\mathcal{E}_6}. &  \\
   
\end{array} \
\right. \]  
where each of $\mathcal{E}_i$ $(i=1,2)$ is a $b/m$-branch at $\mathcal{T}_{1}$ and a $m/t$-branch at $\mathcal{T}_{2}$ while the double edge $\mathcal{E}_3$ is a $m/t$-branch at $\mathcal{T}_{1}$ and a $b/m$-branch at $\mathcal{T}_{2}$. The double edge $\mathcal{E}_4$ (resp. $\mathcal{E}_6$) is a double loop based at $\mathcal{T}_1$ (resp. $\mathcal{T}_2$) such that it is a $b/t$- and a $m/t$- (resp. $b/t$ and $b/m$)- branch. The double edge $\mathcal{E}_5$ is a $b/t$-branch at both triple points. \\
We have $C_1^U \cap (C_2^U \setminus \mathcal{E}_4^U) =\{\mathcal{T}_2^T\}$, $C_1^L \cap (C_2^U \setminus \mathcal{E}_4^U) =\{\mathcal{T}_2^M\}$ and $C_1^U \cap (C_2^L \setminus \mathcal{E}_6^L)=\{\mathcal{T}_1^M\}$. Therefore, $[C_2^U \setminus \mathcal{E}_4^U] =[C_2^L \setminus \mathcal{E}_6^L]$ in $H_1(F)$ and they represent a generator of the first homology group of $F$. The other generator is represented by $[C_1^U]=[C_1^L]$. Now $\midpoint{\mathcal{E}_4^U}$ is a closed path in the torus, denoted by $l_1$.  Suppose that $l_1$ is not spanned by a disk in $F$. It is not difficult to see that either $l_1$ must intersect transversally one of the generators. But $\Delta$ has only two triple points. This is a contradiction. Thus, $l_1$  must be on the boundary of disks in $F$. By following a similar transformation applied in the proof of Lemma \ref{Pinch}, we eliminate the two triple points and this contradicts the assumption that $\Delta$ is a $t$-minimal. 

\item[Case 6.] There are a double point circle $C_1$ and double point interval $C_2$ with two loops:
\[   \left\{
\begin{array}{ll}
      C_1: \midpoint{\mathcal{E}_1} \cup \midpoint{\mathcal{E}_2};& \\
    C_2: \midpoint{\mathcal{E}_3} \cup \midpoint{\mathcal{E}_4} \cup \midpoint{\mathcal{E}_5} \cup \midpoint{\mathcal{E}_6}  \cup \midpoint{\mathcal{E}_7}.  &  \\
   
\end{array} \
\right. \]
such that each of $\mathcal{E}_i$ $(i=1,2)$ is a $b/m$-branch at $\mathcal{T}_{1}$ and a $m/t$-branch at $\mathcal{T}_{2}$ while the double edge $\mathcal{E}_5$ is a $m/t$-branch at $\mathcal{T}_{1}$ and a $b/m$-branch at $\mathcal{T}_{2}$ . The double edge $\mathcal{E}_4$ (resp. $\mathcal{E}_6$) is a double loop based at $\mathcal{T}_1$ (resp. $\mathcal{T}_2$) such that it is a $b/t$- and a $m/t$- (resp. $b/t$ and $b/m$)- branch. The end points of the double edge $\mathcal{E}_3$ (resp. $\mathcal{E}_7$) is the triple point $\mathcal{T}_1$ (resp. $\mathcal{T}_2$) and a branch point.  \\
 Suppose that neither  $\midpoint{\mathcal{E}_4^U}$ nor $\midpoint{\mathcal{E}_6^L}$ bounds a disk in $F$. Since $\midpoint{\mathcal{E}_4^U} \cap \midpoint{\mathcal{E}_6^L}=\emptyset $,   $[\midpoint{\mathcal{E}_4^U}] = [\midpoint{\mathcal{E}_6^L}] $ in $H_1(F)$.  Let $\alpha$ be a closed path in $F$ that is defined by $\big(C_2^U \setminus \mathcal{E}_4^U\big) \cup \big(C_2^L \setminus \mathcal{E}_6^L\big)$. In particular, $\alpha$  defines a graph, $G$, in $F$ with two vertices, $a_1$ and $a_2$,  and two edges, $e_1$ and $e_2$,  as shown in  Figure \ref{cellu} such that
\begin{itemize}
\item[-]   $\mathcal{E}_5^U \cup \midpoint{\mathcal{E}_6^U} \cup \midpoint{\mathcal{E}_7^U} \cup (\midpoint{\mathcal{E}_7^L}\setminus{\mathcal{T}_{2}^B})=e_1 $, 
\item[-]   $(\midpoint{\mathcal{E}_3^U} \setminus{\mathcal{T}_{1}^T}) \cup \midpoint{\mathcal{E}_3^L} \cup \midpoint{\mathcal{E}_4^L} \cup \mathcal{E}_5^L=e_2$,
\item[-] $\mathcal{T}_{1}^T=a_1$ and $\mathcal{T}_{2}^B=a_2$. 
\end{itemize}
Suppose for the sake of contradiction that $\alpha$ does not bound a disk in $F$.  Since $\alpha \cap \midpoint{\mathcal{E}_4^U}=\{\mathcal{T}_1^T\}$, $[\mathcal{E}_4^U]$ and $ [\alpha]$ represent two distinct generators of $H_1(F)$.  Note that  $C_1^L$   intersects transversally $e_1$ at a single crossing point and intersects $e_2$ at an exactly one crossing point. From this notation, it is easy to see that  $[C_1^L]=[\midpoint{\mathcal{E}_4^U}]$  in $H_1(F)$ (By a similar argument we show that $[C_1^U]=[\midpoint{\mathcal{E}_4^U}]$  in $H_1(F)$).  Therefore, $[C_1^L] \neq [\alpha]$ in $H_1(F)$. But the intersection number $I(C_1^L,\alpha)=0$, a contradiction.  We obtain that $\alpha$ bounds a disk in $F$, i.e. $G$ is a planer graph in $F$. We get
\begin{equation}
[C_1^U] =[C_1^L]=[\midpoint{\mathcal{E}_4^U}]=[\midpoint{\mathcal{E}_6^L}] \quad \text{in} \quad  H_1(F)
 \label{cont}
 \end{equation}
There is an annulus $A$ on $F$ bounded by two parallel curves $C_1^U$ and $C_1^L$ such that $p(A)$ is homeomorphic to a torus, where $p$ is the orthogonal projection. Let $\midpoint{\mathcal{E}_4^U} \times [-1,1]$ be a neighbourhood of $\midpoint{\mathcal{E}_4^U}$ in $F$, where $\midpoint{\mathcal{E}_4^U} \times \{0\} \cong\midpoint{\mathcal{E}_4^U}$. In $\Delta$, $p\big(\midpoint{\mathcal{E}_4^U} \times [-1,1]\big)$ intersects $p(A)$ transversally at the double edge $\mathcal{E}_4$ and passes through the double edge $C_1$  so that we have the triple point $\mathcal{T}_1$.  In $p(A)$, the double loop $\midpoint{\mathcal{E}_4}$ meets $C_1$ at $\mathcal{T}_1$ which implies that 
\begin{equation}\label{123}
[C_1] \neq  [\midpoint{\mathcal{E}_4}] \quad \text{in} \quad H_1(p(A))
\end{equation} 
Obviously, there is a homotopy function $g_t:[0,1] \times [0,1] \rightarrow \Delta$ with $h([0,1]\times \{0\})=\midpoint{\mathcal{E}_4}$ and $h([0,1]\times \{1\})=p\big(\midpoint{\mathcal{E}_4^U} \times \{1\}\big)$.

Assume that $C_1^L \times [-1,1]$ is a neighbourhood of $C_1^L$ in $F$ such that $C_1^L \times \{0\} \cong C_1^L$ and $p\big(C_1^L \times [-1,0]\big)$ is contained in $p(A)$. Obviously, there is a homotopy function $h_t:[0,1] \times [0,1] \rightarrow \Delta$ with $h([0,1]\times \{0\})=C_1$ and $h([0,1]\times \{1\})=p\big(C_1^L \times \{1\}\big)$. From (\ref{123}) and the existence of homotopy functions $g_t$ and $h_t$ above, we see that 
\begin{equation}
[C_1^L] \neq [\midpoint{\mathcal{E}_4^U}] \quad \text{in} \quad H_1(F)
\end{equation}
which contradicts equation (\ref{cont}).   \\  
We obtain that $\midpoint{\mathcal{E}_4^U}$ or $\midpoint{\mathcal{E}_6^L}$ must bound a disk in $F$ and hence we can eliminate the two triple points as in Lemma \ref{Pinch}.

\begin{figure}[H]
\centering
\captionsetup{font=scriptsize}      
\mbox{\includegraphics[scale=0.5]{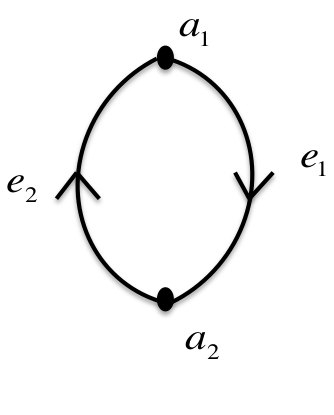}}
 \caption{}
 \label{cellu}
\end{figure} 

\end{itemize}
\end{proof}

\bibliographystyle{plain}

\end{document}